\documentclass[reqno,12pt]{amsart}
\usepackage{amscd,amsfonts,amssymb}
\textwidth=14.8cm
\textheight=24.09cm
\topmargin=0.0cm
\oddsidemargin=1.0cm
\evensidemargin=1.0cm
\numberwithin{equation}{section}
\newtheorem{Theorem}{Theorem}[section]
\newtheorem{Lemma}{Lemma}[section]
\newtheorem{Corollary}{Corollary}[section]

\theoremstyle{definition}
\newtheorem{Definition}{Definition}[section]
\theoremstyle{remark}

\newtheorem{Example}{Example}[section]
\newcommand{\sign}{\mathop{\rm sign}\nolimits}

\author{Andrej A. Kon'kov}
\address{Department of Differential Equations,
Faculty of Mechanics and Mathematics,
Mo\-s\-cow Lo\-mo\-no\-sov State University,
Vorobyovy Gory,
119992 Moscow, Russia}
\email{konkov@mech.math.msu.su}
\title[]{On stabilization of solutions of nonlinear parabolic equations with a gradient term}
\thanks{The research was supported by RFBR, grant 11-01-12018-ofi-m-2011.}
\keywords{Nonlinear parabolic equations, Unbounded domains, Stabilization to zero}
\subjclass{35J15, 35J60, 35J61, 35J62, 35J92}
\date{}

\begin{document}

\begin{abstract}

For parabolic equations of the form
$$
	\frac{\partial u}{\partial t}
	-
	\sum_{i,j=1}^n
	a_{ij} (x, u)
	\frac{\partial^2 u}{\partial x_i \partial x_j}
	+
	f (x, u, D u)
	=
	0
	\quad
	\mbox{in } {\mathbb R}_+^{n+1},
$$
where 
${\mathbb R}_+^{n+1} = {\mathbb R}^n \times (0, \infty)$, $n \ge 1$,
$D = (\partial / \partial x_1, \ldots, \partial / \partial x_n)$
is the gradient operator, and $f$ is some function, 
we obtain conditions guaranteeing that every solution tends to zero as $t \to \infty$.
\end{abstract}

\maketitle

\section{Introduction}\label{intro}
We study solutions of the equations
\begin{equation}
	\frac{\partial u}{\partial t}
	-
	\sum_{i,j=1}^n
	a_{ij} (x, u)
	\frac{\partial^2 u}{\partial x_i \partial x_j}
	+
	f (x, u, D u)
	=
	0
	\quad
	\mbox{in } {\mathbb R}_+^{n+1},
	\label{1.1}
\end{equation}
where 
${\mathbb R}_+^{n+1} = {\mathbb R}^n \times (0, \infty)$, $n \ge 1$,
and
$D = (\partial / \partial x_1, \ldots, \partial / \partial x_n)$
is the gradient operator.
We assume that 
$$
	\sum_{i,j=1}^n
	a_{ij} (x, \zeta)
	\xi_i
	\xi_j
	>
	0
$$
for all $x \in {\mathbb R}^n$, $\zeta \in {\mathbb R} \setminus \{ 0 \}$, and 
$\xi = (\xi_1, \ldots, \xi_n) \in {\mathbb R}^n \setminus \{ 0 \}$.
Also let there are locally bounded measurable functions
$g : (0, \infty) \to (0, \infty)$,
$h : (0, \infty) \to (0, \infty)$,
and
$p : {\mathbb R}^n \to [0, \infty)$
such that
$$
	\inf_K
	g
	>
	0
	\quad
	\mbox{and}
	\quad
	\inf_K
	h
	>
	0
$$
for any compact set $K \subset (0, \infty)$ and, moreover,
\begin{equation}
	f (x, \zeta, \lambda x) \sign \zeta
	\ge
	p (x)
	g (|\zeta|)
	\left(
		1
		+
		\sum_{i,j=1}^n
		|a_{ij} (x, \zeta)|
	\right)
	\label{1.2}
\end{equation}
for all $x \in {\mathbb R}^n$, $\zeta \in {\mathbb R} \setminus \{ 0 \}$,
and $0 \le \lambda \le p (x) h (|\zeta|)$,
where 
$$
	\sign \zeta
	=
	\left\{
		\begin{array}{rl}
		1,
		&
		\zeta > 0,
		\\
		0,
		&
		\zeta = 0,
		\\
		- 1,
		&
		\zeta < 0.
	\end{array}
	\right.
$$

By a solution of~\eqref{1.1} we mean a function $u$ that has two continuous derivatives with respect to $x$ and one continuous derivative with respect to $t$ and satisfies equation~\eqref{1.1} in the classical sense~\cite{Landis}.

No smoothness assumptions on $a_{ij}$ and $f$ are imposed, 
we do not even require these functions to be measurable.

Let us denote 
$B_r^x = \{ y \in {\mathbb R}^n : |y - x| < r \}$,
$S_r^x = \{ y \in {\mathbb R}^n : |y - x| = r \}$,
and
$Q_{x, r}^{t_1, t_2} = B_r^x \times (t_1, t_2)$.
In the case of $x = 0$, we write $B_r$, $S_r$, and $Q_r^{t_1, t_2}$ 
instead of $B_r^0$, $S_r^0$, and $Q_{0, r}^{t_1, t_2}$, respectively.

Put
$$
	q (r)
	=
	\inf_{B_r}
	p,
	\quad
	r \in (0, \infty).
$$

For any function $\varphi : (0, \infty) \to {\mathbb R}$ and a real number $\theta > 1$ we also denote
$$
	\varphi_\theta (\zeta)
	=
	\inf_{(\zeta / \theta, \theta \zeta)}
	\varphi,
	\quad
	\zeta \in (0, \infty).
$$

The questions treated in this paper were investigated earlier by a number 
of authors~[1--7, 10, 11].
Below, we obtain conditions guaranteeing that every solution of~\eqref{1.1} tends
to zero as $t \to \infty$. 
These conditions take into account the dependence of the function $f$ on the gradient term $D u$.
Our results are applicable to a wide class of nonlinear equations 
(see Examples~\ref{E2.1}--\ref{E2.4}).

\section{Main results}

\begin{Theorem}\label{T2.1} Let
\begin{equation}
	\int_1^\infty
	r q (r)
	\,
	dr
	=
	\infty
	\label{T2.1.1}
\end{equation}
and, moreover,
\begin{equation}
	\int_1^\infty
	(g_\theta (\zeta) \zeta)^{- 1/2}
	\,
	d\zeta
	<
	\infty
	\label{T2.1.2}
\end{equation}
and
\begin{equation}
	\int_1^\infty
	\frac{
		d\zeta
	}{
		h_\theta (\zeta)
	}
	<
	\infty
	\label{T2.1.3}
\end{equation}
for some real number $\theta > 1$. 
Then any solution of~\eqref{1.1} stabilizes to zero uniformly
on an arbitrary compact set $K \subset {\mathbb R}^n$ as $t \to \infty$,
i.e.
$$
	\lim_{t \to \infty}
	\sup_{x \in K}
	|u (x, t)|
	=
	0.
$$
\end{Theorem}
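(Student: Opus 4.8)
The plan is to construct a supersolution barrier that decays to zero as $t\to\infty$, and to invoke the comparison principle (valid here because the equation is classical and locally uniformly parabolic wherever $u\neq 0$). Fix a solution $u$. We first reduce to the one-sided estimate $u(x,t)\le\varepsilon$ on every compact set for large $t$; the bound from below follows by applying the same argument to $-u$, since the structural hypothesis~\eqref{1.2} is odd in $\zeta$ in the appropriate sense.

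First I would look for a barrier of the separated form $w(x,t)=v(|x|)\,\psi(t)$, or, more robustly, a function $w$ that depends on $x$ only through $r=|x|$ and is radially increasing, chosen so that on the region where $0<w\le M$ the gradient $Dw=w'(r)\,x/r$ satisfies $0\le w'(r)\le p(x)h(w)$, bringing hypothesis~\eqref{1.2} into play. The point of~\eqref{1.2} is that along such directions $f(x,w,Dw)\sign w$ dominates $p(x)g(w)(1+\sum|a_{ij}|)$, so that
\[
	\frac{\partial w}{\partial t}
	-
	\sum_{i,j}a_{ij}(x,w)\frac{\partial^2 w}{\partial x_i\partial x_j}
	+
	f(x,w,Dw)
	\ge
	\frac{\partial w}{\partial t}
	-
	C(1+|w''|+|w'|/r)\Bigl(\sum|a_{ij}|\Bigr)
	+
	p(x)g(w)\Bigl(1+\sum|a_{ij}|\Bigr),
\]
and the bad second-derivative terms, which carry a factor $\sum|a_{ij}|$ with unknown sign, can be absorbed into $p(x)g(w)\sum|a_{ij}|$ provided the spatial profile is built from the ODE $v''+ (n-1)v'/r \lesssim q(r)g(v)$ with $v'\ge 0$; this is exactly where~\eqref{T2.1.1} enters, guaranteeing (via a Keller--Osserman-type analysis using~\eqref{T2.1.2}) that such a radial profile exists, is bounded, and can be made arbitrarily small on a prescribed ball by choosing it large near the origin — more precisely, condition~\eqref{T2.1.2} forces the solution of $v''\sim q g(v)$ to blow up at a finite radius or to dominate any constant, so the associated "first-exit" function is small on $B_R$ once the outer data is large. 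Condition~\eqref{T2.1.3}, symmetrically, guarantees via the ODE $\dot\psi = -h_\theta(\psi)$-type comparison that the temporal factor decays to $0$, while keeping the gradient constraint $w'\le p\,h(w)$ consistent as $w$ shrinks; the functions $g_\theta,h_\theta$ appear rather than $g,h$ because $w$ ranges over an interval, not a point, and one needs a lower bound for $g,h$ uniform over that range.

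The key steps, in order: (i) prove an ODE lemma — under~\eqref{T2.1.1} and~\eqref{T2.1.2} the boundary-value problem for the radial profile $v$ on $B_r$ with $v=0$ on $S_\rho$, $\rho<r$, has a solution that stays below a given $\varepsilon$ on $B_{r/2}$ for $\rho$ close enough to $r$, with the simultaneous bound $v'\le q(r)h_\theta(v)$ built in by the same construction; (ii) assemble the space--time barrier $w$, checking that it is a supersolution of~\eqref{1.1} in $\{w>0\}$ using~\eqref{1.2} to kill the $a_{ij}$ terms as above; (iii) fix $T>0$; since $u(\cdot,T)$ is continuous it is bounded on any ball $B_R$, so by choosing the outer/initial data of $w$ large on $\partial B_R$ and at $t=T$ we get $u\le w$ on the parabolic boundary of $Q_{R}^{T,\infty}$; (iv) apply the comparison principle on $Q_R^{T,\infty}$ to conclude $u\le w$ there, hence $\limsup_{t\to\infty}\sup_{B_{R/2}}u\le \varepsilon$; (v) let $R\to\infty$ and $\varepsilon\to 0$, and repeat with $-u$.

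The main obstacle I anticipate is step (ii) together with the self-consistency of the gradient constraint: the hypothesis~\eqref{1.2} is only available when $0\le\lambda\le p(x)h(|\zeta|)$, so the barrier must be engineered so that its own radial derivative never leaves this admissible cone — yet $p$ (through $q$) may degenerate, and $h_\theta$ shrinks as the barrier decays. Balancing the growth of $v$ in $r$ (needed to make $v$ small on the inner ball, driven by~\eqref{T2.1.1}) against the ceiling $v'\le q(r)h_\theta(v)$, while simultaneously arranging the time decay from~\eqref{T2.1.3} without violating that ceiling, is the delicate point; the integrals~\eqref{T2.1.2} and~\eqref{T2.1.3} are precisely the sharp conditions making this balancing possible, and the proof will essentially be a careful bookkeeping of these three ODE comparisons glued together.
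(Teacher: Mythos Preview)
The paper's proof of Theorem~2.1 is literally one line: apply Theorem~2.2 to $u$ and to $-u$. Your reduction to a one-sided estimate via $-u$ matches this. The substance is the one-sided bound (Theorem~2.2), and here your route diverges sharply from the paper's.

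You propose a \emph{global} comparison: build a single radial barrier $w$ on $Q_R^{T,\infty}$ that is small on $B_{R/2}$, dominates $u$ on the parabolic boundary, and decays in $t$. The paper instead works \emph{locally and iteratively}. On each bounded parabolic annulus $Q_{r_2}^{\tau-\tau_2,\tau}\setminus Q_{r_1}^{\tau-\tau_1,\tau}$ it compares $u$ with an explicit barrier $w(x,t)=k_1\varphi\bigl((|x|-r_0)/(r_2-r_0)\bigr)+k_2\varphi\bigl((\tau-\tau_1-t)/(\tau_2-\tau_1)\bigr)$ (Lemma~3.2), obtaining a lower bound on the increment $M_2-M_1$ of the suprema over nested cylinders in terms of $q(r_2)$ and $\inf g$ or $\inf h$. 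These increments are then chained over dyadic scales $r_i=2^i r$ (Lemmas~3.3--3.4) and summed into the integral inequality
\[
\Bigl(\int_{m_0}^{\infty}(g_\theta(\zeta)\zeta)^{-1/2}\,d\zeta\Bigr)^2+\int_{m_0}^{\infty}\frac{d\zeta}{h_\theta(\zeta)}\ \ge\ C\int_{r}^{r_k}\rho\,q(4\rho)\,d\rho,
\]
which, by \eqref{T2.1.1}--\eqref{T2.1.3}, forces $m_0=\sup_{Q_r^{t-r^2,t}}u\to 0$ as $t\to\infty$.

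Your global approach runs into an obstacle the local one sidesteps, and it is precisely the one you flag as unresolved. No bound on $u$ is assumed, so on the lateral boundary $S_R\times[T,\infty)$ you have nothing to compare against unless the barrier blows up there. But hypothesis~\eqref{1.2} is only available when $Dw=\lambda x$ with $0\le\lambda\le p(x)h(|w|)$; this caps the radial derivative of any admissible barrier by $r\,q(r)\,h(|w|)$, and it is not at all clear one can simultaneously force blow-up at $|x|=R$, smallness on $B_{R/2}$, the gradient ceiling, \emph{and} time decay. The paper never faces this: at each step the comparison is on a bounded cylinder with finite boundary data $M_2=\sup u$, and the barrier is of size $\min\{k_1,k_2\}\sim M_2-M_1$, so its gradient is small and the constraint in~\eqref{1.2} is easy to meet. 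In short, the paper trades one hard global construction for many easy local ones and recovers the global conclusion by summation; your sketch, as it stands, leaves the central step~(ii)/(iii) genuinely open.
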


Theorem~\ref{T2.1} will be proved later. Now, we demonstrate its application.

\begin{Example}\label{E2.1}
Consider the equation
\begin{equation}
	u_t
	=
	\Delta u 
	+ 
	b (x, u, D u)
	-
	c (x) |u|^{\sigma - 1} u
	\quad
	\mbox{in } {\mathbb R}_+^{n+1},
	\label{E2.1.1}
\end{equation}
where
\begin{equation}
	|b (x, \zeta, \xi)|
	\le
	b_0 (1 + |x|)^k \zeta^\mu |\xi|^\alpha,
	\quad
	b_0 = const > 0,
	\label{E2.1.2}
\end{equation}
and
\begin{equation}
	c (x) \ge c_0 (1 + |x|)^l,
	\quad
	c_0 = const > 0,
	\label{E2.1.3}
\end{equation}
for all 
$x \in {\mathbb R}^n$, 
$\zeta \in {\mathbb R} \setminus \{ 0 \}$, 
and 
$\xi \in {\mathbb R}^n$.
We assume that
$\alpha > 0$ whereas $\mu$, $\sigma$, $k$, and $l$ can be arbitrary real numbers.

According to Theorem~\ref{T2.1}, if
\begin{equation}
	\min 
	\{ 
		l - k + \alpha,
		l + 2
	\}
	\ge
	0
	\label{E2.1.4}
\end{equation}
and
\begin{equation}
	\sigma > \max \{ 1, \alpha + \mu \},
	\label{E2.1.5}
\end{equation}
then any solution of~\eqref{E2.1.1} stabilizes to zero uniformly on an arbitrary compact 
subset of ${\mathbb R}^n$ as $t \to \infty$.
Really, taking
$f (x, \zeta, \xi) = c (x) |\zeta|^{\sigma - 1} \zeta - b (x, \zeta, \xi)$,
$g (\zeta) = \zeta^\sigma$,
and 
$h (\zeta) = \zeta^{(\sigma - \mu) / \alpha}$, 
we obtain that~\eqref{1.2} is valid with 
\begin{equation}
	p (x)
	=
	p_0
	(1 + |x|)^{
		\min 
		\{ 
			(l - k) / \alpha - 1,
			\,
			l
		\}
	},
	\label{E2.1.6}
\end{equation}
where $p_0 > 0$ is a sufficiently small real number.
In so doing, relation~\eqref{T2.1.1} is equivalent to~\eqref{E2.1.4} 
while~\eqref{T2.1.2} and~\eqref{T2.1.3} are equivalent to~\eqref{E2.1.5}.

At the same time, if~\eqref{E2.1.4} is not fulfilled, 
then for some functions $b$ and $c$ satisfying~\eqref{E2.1.2} and~\eqref{E2.1.3}
equation~\eqref{E2.1.1} has a positive solution which does not depend on the variable $t$.
This solution obviously can not stabilize to zero as $t \to \infty$.
In turn, if~\eqref{E2.1.5} is not fulfilled, 
then for all real numbers $k$ and $l$ there exist
functions $b$ and $c$ such that~\eqref{E2.1.2} and~\eqref{E2.1.3} are valid 
and equation~\eqref{E2.1.1} has a positive solution independent of $t$. 
In this sense, conditions~\eqref{E2.1.4} and~\eqref{E2.1.5} are the best possible.

We also note that~\eqref{E2.1.4} and~\eqref{E2.1.5}
correspond to the blow-up conditions for non-negative solutions of the elliptic inequalities
$$
	\Delta u 
	+ 
	b (x, u, D u)
	\ge
	c (x) u^\sigma
	\quad
	\mbox{in } {\mathbb R}^n
$$
considered in ~\cite[Example~2.1]{meNONANA} for $\mu = 0$ and $1 \le \alpha \le 2$.
\end{Example}

\begin{Example}\label{E2.2}
In~\eqref{E2.1.1}, let the functions $b$ and $c$ satisfy the relations
\begin{equation}
	|b (x, \zeta, \xi)|
	\le
	b_0 (1 + |x|)^k \log^s (2 + |x|) \zeta^\mu |\xi|^\alpha,
	\quad
	b_0 = const > 0,
	\label{E2.2.1}
\end{equation}
and
\begin{equation}
	c (x) \ge c_0 (1 + |x|)^l \log^m (2 + |x|),
	\quad
	c_0 = const > 0,
	\label{E2.2.2}
\end{equation}
for all $x \in {\mathbb R}^n$, $\zeta \in {\mathbb R} \setminus \{ 0 \}$, and $\xi \in {\mathbb R}^n$,
where $\alpha$, $k$, $s$, $l$, and $m$ are real numbers with $\alpha > 0$ and
$$
	\min 
	\{ 
		l - k + \alpha,
		l + 2
	\}
	=
	0.
$$
In other words, we consider the case of the critical exponents $l$ and $k$ in~\eqref{E2.1.4}.

As in the previous example, we take 
$f (x, \zeta, \xi) = c (x) |\zeta|^{\sigma - 1} \zeta - b (x, \zeta, \xi)$,
$g (\zeta) = \zeta^\sigma$,
and 
$h (\zeta) = \zeta^{(\sigma - \mu) / \alpha}$.
It can be verified that~\eqref{1.2} is valid for 
$$
	p (x)
	=
	p_0
	(1 + |x|)^{-2}
	\log^\gamma
	(2 + |x|),
$$
where $p_0 > 0$ is a sufficiently small real number and
$$
	\gamma
	=
	\left\{
		\begin{aligned}
			&
			m,
			&
			&
			l + 2 < l - k + \alpha,
			\\
			&
			\min 
			\{ 
				(m - s) / \alpha, 
				\, 
				m 
			\},
			&
			&
			l + 2 = l - k + \alpha,
			\\
			&
			(m - s) / \alpha,
			&
			&
			l + 2 > l - k + \alpha.
		\end{aligned}
	\right.
$$
Thus, by Theorem~\ref{T2.1}, if~\eqref{E2.1.5} holds and
\begin{equation}
	\gamma
	\ge
	- 1,
	\label{E2.2.3}
\end{equation}
then any solution of~\eqref{E2.1.1} stabilizes to zero 
uniformly on an arbitrary compact subset of ${\mathbb R}^n$ as $t \to \infty$.

If~\eqref{E2.2.3} is not fulfilled and $n \ge 3$, 
then there exist functions $b$ and $c$ such that~\eqref{E2.2.1} and~\eqref{E2.2.2}
are valid and~\eqref{E2.1.1} has a positive solution independent of $t$.
Condition~\eqref{E2.1.5} is also the best possible.
\end{Example}

\begin{Example}\label{E2.3}
Consider the equation
\begin{equation}
	u_t
	=
	\Delta u 
	+ 
	b (x, u, D u)
	-
	c (x) 
	|u|^{\sigma - 1} u
	\log^\nu (1 + |u|)
	\quad
	\mbox{in } {\mathbb R}_+^{n+1},
	\label{E2.3.1}
\end{equation}
where the functions $b$ and $c$ satisfy~\eqref{E2.1.2} and~\eqref{E2.1.3} 
with $\alpha > 0$ and
$$
	\sigma = \max \{ 1, \alpha + \mu \},
$$
i.e. we are interested in the case of the critical exponent $\sigma$ in~\eqref{E2.1.5}.

Taking
$f (x, \zeta, \xi) = c (x) |\zeta|^{\sigma - 1} \zeta \log^\nu (1 + \zeta) - b (x, \zeta, \xi)$,
$g (\zeta) = \zeta^\sigma \log^\nu (1 + \zeta)$, 
and
$h (\zeta) = \zeta^{(\sigma - \mu) / \alpha} \log^{\nu / \alpha} (1 + \zeta)$,
one can see that~\eqref{1.2} holds with some function $p$ of the form~\eqref{E2.1.6}.
Thus, in accordance with Theorem~\ref{T2.1} if~\eqref{E2.1.4} is valid and
\begin{equation}
	\nu
	>
	\left\{
		\begin{aligned}
			&
			2,
			&
			&
			\alpha + \mu < 1,
			\\
			&
			\max \{ 2, \alpha \},
			&
			&
			\alpha + \mu = 1,
			\\
			&
			\alpha,
			&
			&
			\alpha + \mu > 1,
		\end{aligned}
	\right.
	\label{E3.1.3}
\end{equation}
then any solution of~\eqref{E2.3.1} stabilizes to zero 
uniformly on an arbitrary compact subset of ${\mathbb R}^n$ as $t \to \infty$.

If~\eqref{E2.1.4} is not fulfilled, then there are functions $b$ and $c$ 
such that~\eqref{E2.1.2} and~\eqref{E2.1.3} hold and equation~\eqref{E2.3.1} 
has a positive solution independent of the variable $t$.
In turn, if~\eqref{E3.1.3} is not fulfilled, 
then for all real numbers $k$ and $l$
there exist functions $b$ and $c$ satisfying~\eqref{E2.1.2} and~\eqref{E2.1.3} 
for which~\eqref{E2.3.1} has a positive solution independent of $t$.
\end{Example}

\begin{Example}\label{E2.4}
In the equation
\begin{equation}
	u_t
	=
	\Delta u 
	+ 
	b (x, D u)
	-
	c (x, u)
	\quad
	\mbox{in } {\mathbb R}_+^{n+1},
	\label{E2.4.1}
\end{equation}
let
\begin{equation}
	|b (x, \xi)|
	\le
	\varphi (|\xi|)
	\label{E2.4.2}
\end{equation}
and
\begin{equation}
	c (x, \zeta) \sign \zeta
	\ge
	\psi (|\zeta|)
	\label{E2.4.3}
\end{equation}
for all $x \in {\mathbb R}^n$, $\zeta \in {\mathbb R} \setminus \{ 0 \}$, and $\xi \in {\mathbb R}^n$,
where 
$\varphi : [0, \infty) \to [0, \infty)$
and
$\psi : (0, \infty) \to (0, \infty)$
are non-decreasing continuous functions.
We also assume that $\varphi : [0, \infty) \to [0, \infty)$ is a bijection and
$$
	\liminf_{\zeta \to \infty}
	\frac{
		\varphi^{-1} (\zeta)
	}{
		\varphi^{-1} (2 \zeta)
	}
	>
	0,
$$
where $\varphi^{-1}$ is the inverse function of $\varphi$.

According to Theorem~\ref{T2.1}, if
\begin{equation}
	\int_1^\infty
	(\psi (\zeta) \zeta)^{- 1/2}
	\,
	d\zeta
	<
	\infty
	\label{E2.4.6}
\end{equation}
and
\begin{equation}
	\int_1^\infty
	\frac{
		d\zeta
	}{
		\varphi^{-1} \circ \psi (\zeta)
	}
	<
	\infty,
	\label{E2.4.7}
\end{equation}
then any solution of~\eqref{E2.4.1} stabilizes to zero 
uniformly on an arbitrary compact subset of ${\mathbb R}^n$ as $t \to \infty$.
Indeed, we put 
$
	f (x, \zeta, \xi) = c (x, \zeta) - b (x, \xi).
$
It does not present any particular problem to verify that~\eqref{1.2} is valid with
$
	g (\zeta) = \psi (\zeta),
$
$
	h (\zeta) = \varphi^{-1} (\varepsilon \psi (\zeta)),
$
and
$
	p (x) = \varepsilon,
$
where $\varepsilon > 0$ is a sufficiently small real number.
In so doing, condition~\eqref{T2.1.1} is certainly satisfied
while~\eqref{T2.1.2} and~\eqref{T2.1.3} are equivalent to~\eqref{E2.4.6} and~\eqref{E2.4.7},
respectively.

We can show that, 
in the case where at least one of conditions~\eqref{E2.4.6}, \eqref{E2.4.7} is not fulfilled, 
there are functions $b$ and $c$ such that~\eqref{E2.4.2} and~\eqref{E2.4.3} hold and
equation~\eqref{E2.4.1} has a positive solution which does not depend on $t$.
\end{Example}

Proof of Theorem~\ref{T2.1} relies on the following assertion.

\begin{Theorem}\label{T2.2}
Suppose that $u$ is a solution of the inequality
\begin{equation}
	\frac{\partial u}{\partial t}
	-
	\sum_{i,j=1}^n
	a_{ij} (x, u)
	\frac{\partial^2 u}{\partial x_i \partial x_j}
	+
	f (x, u, D u)
	\le
	0
	\quad
	\mbox{in } {\mathbb R}_+^{n+1}.
	\label{T2.2.1}
\end{equation}
Also let there exist a real number $\theta > 1$ 
such that~\eqref{T2.1.1}, \eqref{T2.1.2}, and~\eqref{T2.1.3} hold.
Then
$$
	\lim_{t \to \infty}
	\sup_{x \in K}
	u_+ (x, t)
	=
	0
$$
for any compact set $K \subset {\mathbb R}^n$, where
$$
	u_+ (x, t)
	=
	\left\{
		\begin{aligned}
			&
			u (x, t),
			&
			&
			u (x, t) > 0,
			\\
			&
			0,
			&
			&
			u (x, t) \le 0.
		\end{aligned}
	\right.
$$
\end{Theorem}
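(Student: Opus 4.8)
The plan is to reduce the assertion to a comparison principle by constructing an explicit radial supersolution. Fix a compact set $K$; it suffices to treat $K = \overline{B_R}$ for arbitrary $R > 0$. The idea is to build, for each prescribed height $\varepsilon > 0$ and each large time $T$, a function $w = w(x)$ (time-independent) on a large ball $B_\rho$ with $\rho \gg R$ such that $w \ge u$ on the parabolic boundary of $Q_{0,\rho}^{T, \infty}$ and such that $w$ is a supersolution of $\partial_t v - \sum a_{ij}(x,v) \partial_{ij} v + f(x, v, Dv) \ge 0$, with $w \le \varepsilon$ on $B_R$. Then the maximum principle of Landis (applicable since, on the set where the solution is positive, the equation is uniformly elliptic by the positivity hypothesis on $a_{ij}$) forces $u \le w \le \varepsilon$ on $B_R \times (T, \infty)$, and letting $\varepsilon \to 0$ gives the claim. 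A subtlety to dispatch first: a solution of the \emph{inequality} \eqref{T2.2.1} need not be bounded a priori, so one actually compares on a finite time slab $(T, T')$ where $u$ is bounded, obtains the bound there uniformly in $T'$, and passes to the limit; alternatively one notes the comparison is purely local in $t$ once the spatial supersolution is in hand.

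The core is the construction of $w$. I would look for $w$ radial, $w(x) = W(|x|)$, decreasing in $r = |x|$, with $W(\rho) = M := \sup_{B_\rho} u_+(\cdot, T)$ (finite for each fixed $T$) and $W(R)$ as small as we like. On the region where $0 \le \lambda \le p(x) h(W)$ — which we will arrange — hypothesis \eqref{1.2} gives $f(x, W, \lambda x) \ge p(x) g(W)(1 + \sum |a_{ij}|)$ provided $DW = \lambda x$, i.e. provided $W$ is taken radial with $W'(r) = -\lambda r$ pointing inward; so I set $\lambda(r) = -W'(r)/r \ge 0$. Then, estimating $-\sum a_{ij}\partial_{ij} w$ crudely by $C(1 + \sum|a_{ij}|)(|W''| + |W'|/r)$, the supersolution inequality reduces to an ODE differential inequality of the form
$$
	c_1 q(r)\, g(W)
	\ge
	|W''| + \frac{|W'|}{r},
$$
together with the gradient constraint $-W'(r) \le r\, q(r)\, h(W)$. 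This is where conditions \eqref{T2.1.1}–\eqref{T2.1.3} enter: \eqref{T2.1.2}, via the substitution that turns $W'' \sim q\, g(W)$ into a first-order equation for $W'$ (multiplying by $W'$ and integrating), produces a finite "travel distance" in the $W$–variable — the integral $\int (g_\theta(\zeta)\zeta)^{-1/2}\,d\zeta < \infty$ is exactly the convergence needed so that $W$ can descend from $M$ to near $0$ over a radial interval whose $r q(r)$–measure is controlled; and \eqref{T2.1.1} guarantees that $\int_R^\rho r q(r)\,dr \to \infty$, so there is enough room to make $W(R)$ as small as desired by taking $\rho$ large. Condition \eqref{T2.1.3}, with the integral $\int d\zeta / h_\theta(\zeta) < \infty$, is what makes the gradient constraint $-W' \le r q h(W)$ compatible with the previous ODE — it bounds the rate at which $W$ is allowed to change so that the slope never exceeds $p(x) h(|u|)$, keeping us inside the regime where \eqref{1.2} is usable. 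The functions $g_\theta, h_\theta$ (infima over $(\zeta/\theta, \theta\zeta)$) appear because the crude ODE estimates only control $W$ up to a bounded multiplicative factor, so one must replace $g(W), h(W)$ by their infima over a comparable range; this is a standard device.

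The main obstacle, and the step deserving the most care, is making this ODE construction robust against the two genuinely nasty features of the hypotheses: (i) $a_{ij}$ and $f$ are not assumed measurable, so every manipulation must be pointwise and the comparison argument must rely only on the classical maximum principle applied to the \emph{difference} $u - w$ on the open set $\{u > w\}$, where ellipticity is guaranteed; and (ii) $g$ and $h$ are merely locally bounded with positive infima on compacts, not monotone or continuous, which is precisely why the $\theta$–infima $g_\theta$, $h_\theta$ are built into \eqref{T2.1.2}–\eqref{T2.1.3} and why I would define $W$ implicitly through these infima rather than through $g, h$ themselves. I would organize the ODE analysis as: first fix $\theta$ from the hypotheses; define an auxiliary increasing function $\Phi(\zeta) = \int_\zeta^\infty (g_\theta(s) s)^{-1/2}\,ds$ (finite, $\to 0$ at $\infty$) and show $W$ can be taken so that $\Phi(W(r))$ grows linearly in $\int_R^r s q(s)\,ds$; then check the gradient bound separately using \eqref{T2.1.3}; then verify $w$ is a supersolution on all of $B_\rho$ where $u_+ > 0$ (trivially a supersolution where $u \le 0$). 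Finally, assemble the comparison: $\sup_{B_R} u_+(\cdot, t) \le W(R) \to 0$ as $\rho \to \infty$, uniformly for $t \ge T$, which is the desired conclusion once we note $M = M(T)$ enters $W$ only through the boundary condition at $r = \rho$ and the freedom in $\rho$ absorbs it.
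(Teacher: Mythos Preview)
Your plan has a genuine gap at the comparison step. A time-independent barrier $w(x)=W(|x|)$ that is small on $B_R$ cannot dominate $u$ on the parabolic boundary of $Q_\rho^{T,T'}$: on the bottom $B_\rho\times\{T\}$ you would need $W(|x|)\ge u(x,T)$ for every $|x|\le\rho$, in particular $\varepsilon\ge W(R)\ge \sup_{B_R}u(\cdot,T)$, which is precisely the unknown conclusion. On the lateral part $S_\rho\times[T,T']$ there is likewise no a priori bound on $u$ for $t>T$; the quantity $M=\sup_{B_\rho}u_+(\cdot,T)$ controls nothing there, and restricting to a finite slab does not help. There is also a sign slip: condition~\eqref{1.2} applies only for $\lambda\ge 0$, so with $Dw=(W'(|x|)/|x|)\,x$ the gradient must point outward and $W$ must be \emph{nondecreasing} in $r$, not decreasing; moreover the second argument of $f$ (hence of $g,h$) in the comparison is $u$, not $W$, since Lemma~\ref{L3.1} compares $f(x,u,Dv)$ with $f(x,u,Dw)$.

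The paper avoids the bottom-boundary obstruction by never comparing forward from an initial slice. It fixes a terminal time $\tau$, considers nested \emph{backward} cylinders $Q_r^{\tau-r^2,\tau}\subset Q_{2r}^{\tau-4r^2,\tau}$, and builds in Lemma~\ref{L3.2} a barrier depending on both $x$ \emph{and} $t$ that vanishes on the inner cylinder and is bounded below by a constant on the parabolic boundary of the outer one; comparison with a shifted copy of $u$ then yields a lower bound on the gap $M_2-M_1$ between the outer and inner suprema in terms of $q$, $g$, $h$. Iterating over dyadic radii (Lemmas~\ref{L3.3}--\ref{L3.4}) and summing turns these gaps into the integrals of~\eqref{T2.1.2}, \eqref{T2.1.3} on one side and $\int \rho\, q(\rho)\,d\rho$ on the other; as $\tau\to\infty$ the admissible outer radius tends to infinity, so~\eqref{T2.1.1} forces the inner supremum $m_0=\sup_{Q_r^{\tau-r^2,\tau}}u$ to $0$. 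The $g_\theta$, $h_\theta$ infima appear where you expect, but through the dyadic iteration rather than through a single global ODE for $W$.
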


Proof of Theorem~\ref{T2.2} is given in Section~\ref{proof}.
As for equation~\eqref{1.1}, solutions of~\eqref{T2.2.1} 
are understood in the classical sense.

\begin{proof}[Proof of Theorem~$\ref{T2.1}$]
We apply Theorem~\ref{T2.2} to the functions $u$ and $-u$.
\end{proof}

\section{Proof of Theorem~\ref{T2.2}}\label{proof}

In this section we assume that hypotheses of Theorem~\ref{T2.2} are satisfied. Let us denote
$$
	L 
	=
	\sum_{i,j=1}^n
	a_{ij} (x, u)
	\frac{\partial^2}{\partial x_i \partial x_j}.
$$
By $C$ we mean various positive constants which can depend only on $n$ and $\theta$.

\begin{Definition}[\cite{Landis}]\label{D3.1}
A point $(x, t)$ belongs to the upper lid $\gamma (\Omega)$ of an open set 
$\Omega \subset {\mathbb R}_+^{n+1}$ 
if there exist real numbers $r > 0$ and $\varepsilon > 0$ such that 
$Q_{x, r}^{t - \varepsilon, t} \subset \Omega$
and
$Q_{x, r}^{t, t + \varepsilon} \cap \Omega = \emptyset$.
The set 
$\Gamma (\Omega) = \partial \Omega \setminus \gamma (\Omega)$
is called the proper (or parabolic) boundary of $\Omega$.
\end{Definition}

\begin{Lemma}\label{L3.1}
Let $v$ and $w$ satisfy the inequalities
\begin{equation}
	L v - v_t - f (x, u, Dv)
	\ge
	L w - w_t - f (x, u, Dw)
	\quad
	\mbox{in } \omega \cup \gamma (\omega)
	\label{L3.1.1}
\end{equation}
and
\begin{equation}
	v \le w
	\quad
	\mbox{on } \Gamma (\omega),
	\label{L3.1.2}
\end{equation}
where $\omega \ne \emptyset$ is a bounded open subset of ${\mathbb R}_+^n$ 
such that $u$ is a positive function on $\omega \cup \gamma (\omega)$.
Then
\begin{equation}
	v \le w
	\quad
	\mbox{on } \omega \cup \gamma (\omega).
	\label{L3.1.3}
\end{equation}
\end{Lemma}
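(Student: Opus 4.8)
The plan is to prove the lemma by the classical parabolic maximum principle, the key point being that the nonlinearity $f$ enters only through the gradient: at a point where $v-w$ attains a positive maximum the gradients of $v$ and $w$ coincide, so the terms $f(x,u,Dv)$ and $f(x,u,Dw)$ cancel, and no continuity or monotonicity hypothesis on $f$ is needed.

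\emph{Step 1: reduction to a strict inequality.} I would first show that it suffices to treat the case where~\eqref{L3.1.1} holds with strict inequality. Given $\delta>0$, set $\tilde v = v-\delta t$. Since $D\tilde v = Dv$ and $L\tilde v = Lv$ while $\tilde v_t = v_t-\delta$, the function $\tilde v$ satisfies $L\tilde v - \tilde v_t - f(x,u,D\tilde v) \ge Lw - w_t - f(x,u,Dw) + \delta$ in $\omega\cup\gamma(\omega)$, with strict inequality, and $\tilde v \le v \le w$ on $\Gamma(\omega)$ because $t\ge 0$ there. If the assertion is already known for strict inequalities, then $\tilde v\le w$ on $\omega\cup\gamma(\omega)$, i.e. $v\le w+\delta t\le w+\delta T$, where $T<\infty$ because $\omega$ is bounded; letting $\delta\to 0$ yields~\eqref{L3.1.3}.

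\emph{Step 2: locating the maximum.} Assume now that the inequality in~\eqref{L3.1.1} is strict and suppose, for contradiction, that $M=\sup_{\omega\cup\gamma(\omega)}(v-w)>0$. Taking a maximizing sequence and using compactness of $\overline\omega$ together with $\partial\omega = \gamma(\omega)\cup\Gamma(\omega)$ and~\eqref{L3.1.2}, one sees that its limit point cannot lie on $\Gamma(\omega)$, so $v-w$ attains the value $M$ at some $(x_0,t_0)\in\omega\cup\gamma(\omega)$. By Definition~\ref{D3.1}, a full spatial slice $B_r^{x_0}\times\{t_0\}$ lies in $\omega\cup\gamma(\omega)$ and a cylinder $B_r^{x_0}\times(t_0-\varepsilon,t_0)$ lies in $\omega$, so $v$ and $w$ are twice continuously differentiable in $x$ near $(x_0,t_0)$ and once differentiable in $t$ as $t\to t_0^-$.

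\emph{Step 3: the contradiction at $(x_0,t_0)$.} Put $z=v-w$. As $x_0$ is an interior maximum of $z(\cdot,t_0)$ over $B_r^{x_0}$, we have $Dz(x_0,t_0)=0$ and the spatial Hessian $D^2z(x_0,t_0)\le 0$; since $z(x_0,t)\le M$ for $t\in(t_0-\varepsilon,t_0)$ we get $z_t(x_0,t_0)\ge 0$. Because $u(x_0,t_0)>0$, the matrix $(a_{ij}(x_0,u(x_0,t_0)))$ has positive definite symmetric part, whence $Lz(x_0,t_0)\le 0$ (a positive definite matrix contracted with a negative semidefinite Hessian gives a nonpositive number). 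Finally $Dz(x_0,t_0)=0$ means $Dv(x_0,t_0)=Dw(x_0,t_0)$, so $f(x_0,u(x_0,t_0),Dv(x_0,t_0))=f(x_0,u(x_0,t_0),Dw(x_0,t_0))$. Subtracting, at $(x_0,t_0)$ the difference $[Lv-v_t-f(x,u,Dv)]-[Lw-w_t-f(x,u,Dw)]$ equals $Lz-z_t\le 0$, contradicting the strict form of~\eqref{L3.1.1}. Hence $M\le 0$, which is~\eqref{L3.1.3}.

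The gradient term, which at first glance looks like the obstruction to a comparison principle, is in fact harmless precisely because it is evaluated at a point where the two gradients agree; the only genuinely delicate points are the behaviour on the upper lid (checking that $v,w$ retain the required differentiability at $(x_0,t_0)\in\gamma(\omega)$ and that the one-sided time derivative has the correct sign) and the passage to the maximum being attained in $\omega\cup\gamma(\omega)$ rather than on $\Gamma(\omega)$.
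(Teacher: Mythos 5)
Your proof is correct and follows essentially the same route as the paper's: reduce to the case of strict inequality via $v - \delta t$, locate a positive maximum of $v - w$ in $\omega \cup \gamma(\omega)$ (where the paper uses the sublevel sets $\omega_\mu$ and you argue directly via compactness of $\overline{\omega}$ and~\eqref{L3.1.2}), and at that maximum observe that $Dv = Dw$ so the $f$-terms cancel, $v_t \ge w_t$, and the second-order term is nonpositive, yielding a contradiction. The only cosmetic difference is that you deduce $Lz \le 0$ from the fact that a positive-definite matrix contracted with a negative-semidefinite Hessian is nonpositive, while the paper performs a linear change of coordinates diagonalizing $(a_{ij})$; the two are equivalent.
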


\begin{proof}
Lemma~\ref{L3.1} is the standard maximum principle for parabolic inequalities 
in bounded domains~\cite{Landis}.
The only subtlety is that~\eqref{L3.1.1} contains the function $f$.
However, this fact can not affect the proof in a significant way.
Not wanting to be unfounded, we give this proof in detail.

It can obviously be assumed that, in formula~\eqref{L3.1.1}, the inequality is strong;
otherwise we replace $v$ by $v - \varepsilon t$ 
and pass to the limit as $\varepsilon \to +0$.

Denote
$$
	\varphi = v - w.
$$
If~\eqref{L3.1.3} is not valid, then there exists a real number $\mu > 0$ for which the set
$\omega_\mu = \{ (x, t) \in \omega : \varphi (x, t) > \mu \}$ is not empty.

According to~\eqref{L3.1.2}, the closure $\overline{\omega}_\mu$ of the set $\omega_\mu$ 
is contained in $\omega \cup \gamma (\omega)$.
Let us take a point $(x', t') \in \overline{\omega}_\mu$ such that
\begin{equation}
	\varphi (x', t')
	=
	\sup_{
		\overline{\omega}_\mu
	}
	\varphi.
	\label{PL3.1.1}
\end{equation}
We have $D \varphi (x', t') = 0$ and $\varphi_t (x', t') \ge 0$ or, in other words,
$D v (x', t') = D w (x', t')$ and $v_t (x', t') \ge w_t (x', t')$.
It can easily be seen that
\begin{equation}
	\sum_{i,j=1}^n
	a_{ij} (x, u)
	\left.
		\frac{\partial^2 \varphi}{\partial x_i \partial x_j}
	\right|_{
		x = x',
		\,
		t = t'
	}
	\le
	0;
	\label{PL3.1.2}
\end{equation}
otherwise, introducing the new coordinates $y = y (x)$ such that
$$
	\sum_{i,j=1}^n
	a_{ij} (x, u)
	\left.
		\frac{\partial^2 \varphi}{\partial x_i \partial x_j}
	\right|_{
		x = x',
		\,
		t = t'
	}
	=
	\sum_{i=1}^n
	\left.
		\frac{\partial^2 \varphi}{\partial y_i^2}
	\right|_{
		y = y (x'),
		\,
		t = t'
	},
$$
we obtain
$$
	\left.
		\frac{\partial^2 \varphi}{\partial y_i^2}
	\right|_{
		y = y',
		\,
		t = t'
	}
	>
	0
$$
for some $1 \le i \le n$. This contradicts~\eqref{PL3.1.1}. 

At the same time,~\eqref{PL3.1.2} is equivalent to the relation
$$
	\sum_{i,j=1}^n
	a_{ij} (x, u)
	\left.
		\frac{\partial^2 v}{\partial x_i \partial x_j}
	\right|_{
		x = x',
		\,
		t = t'
	}
	\le
	\sum_{i,j=1}^n
	a_{ij} (x, u)
	\left.
		\frac{\partial^2 w}{\partial x_i \partial x_j}
	\right|_{
		x = x',
		\,
		t = t'
	};
$$
therefore, we arrive at a contradiction 
with our assumption that the inequality in~\eqref{L3.1.1} is strong.

The proof is completed.
\end{proof}

\begin{Corollary}\label{C3.1}
Let $v$ satisfy the inequality
$$
	L v - v_t - f (x, u, Dv)
	\ge
	0
	\quad
	\mbox{in } \omega \cup \gamma (\omega),
$$
where $\omega \ne \emptyset$ is a bounded open subset of ${\mathbb R}_+^n$ 
such that $u$ is a positive function on $\omega \cup \gamma (\omega)$.
Then
$$
	v (x, t)
	\le
	\sup_{
		\Gamma (\omega)
	}
	v
$$
for all $(x, t) \in \omega \cup \gamma (\omega)$.
\end{Corollary}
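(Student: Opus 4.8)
The plan is to deduce Corollary~\ref{C3.1} from Lemma~\ref{L3.1} by comparing $v$ with an appropriate constant barrier. Put $M = \sup_{\Gamma(\omega)} v$. If $M = \infty$ there is nothing to prove, so I would assume $M < \infty$ and take $w$ to be the function identically equal to $M$ on $\omega \cup \gamma(\omega)$. Since $w$ is constant, $D w \equiv 0$, $w_t \equiv 0$, and $L w \equiv 0$, so that $L w - w_t - f(x, u, D w) = - f(x, u, 0)$.

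It then remains to verify the two hypotheses of Lemma~\ref{L3.1}. Condition~\eqref{L3.1.2} is immediate, because $v \le M = w$ on $\Gamma(\omega)$ by the definition of $M$. For~\eqref{L3.1.1} I would show that $f(x, u, 0) \ge 0$ on $\omega \cup \gamma(\omega)$; granting this,
$$
	L w - w_t - f(x, u, D w)
	=
	- f(x, u, 0)
	\le
	0
	\le
	L v - v_t - f(x, u, D v)
	\quad
	\mbox{in } \omega \cup \gamma(\omega),
$$
the last inequality being precisely the hypothesis imposed on $v$. To see that $f(x, u, 0) \ge 0$, note that $\lambda = 0$ always satisfies $0 \le \lambda \le p(x) h(|\zeta|)$, so~\eqref{1.2} is applicable with this value of $\lambda$ and gives
$$
	f(x, \zeta, 0) \sign \zeta
	\ge
	p(x) g(|\zeta|)
	\left(
		1 + \sum_{i,j=1}^n |a_{ij}(x, \zeta)|
	\right)
	\ge
	0,
$$
since $p \ge 0$ and $g > 0$; taking $\zeta = u(x, t)$ and using that $u > 0$ on $\omega \cup \gamma(\omega)$, whence $\sign u = 1$ there, we conclude $f(x, u, 0) \ge 0$.

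With both hypotheses checked, Lemma~\ref{L3.1} yields $v \le w = M$ on $\omega \cup \gamma(\omega)$, which is exactly the assertion of the corollary. The only step that is not purely formal is the sign of $f(x,u,0)$, and that is where the structural assumption~\eqref{1.2} enters; once it is available, the rest is just the maximum principle already packaged in Lemma~\ref{L3.1}, so I do not anticipate any real difficulty here.
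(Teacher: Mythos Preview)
Your proof is correct and follows exactly the paper's approach: the paper's proof consists of the single line ``In Lemma~\ref{L3.1}, we take $w = \sup_{\Gamma(\omega)} v$,'' and you have supplied the details it leaves to the reader, in particular the verification via~\eqref{1.2} that $f(x,u,0) \ge 0$ on $\omega \cup \gamma(\omega)$, which is needed for the comparison inequality~\eqref{L3.1.1}.
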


\begin{proof}
In Lemma~\ref{L3.1}, we take 
$$
	w
	=
	\sup_{
		\Gamma (\omega)
	}
	v.
$$
\end{proof}

\begin{Lemma}\label{L3.2}
Let $0 < r_1 < r_2$ and $0 < \tau_1 < \tau_2 < \tau$ be real numbers with
$$
	\sup_{
		Q_{r_1}^{\tau - \tau_1, \tau}
	}
	u
	>
	0.
$$
Then at least one of the following three estimates is valid:
$$
	M_2 - M_1
	\ge
	C
	(r_2 - r_1)^2
	q (r_2) 
	\lim_{\mu \to M_1 - 0}
	\inf_{	
		[\mu, M_2]
	}
	g,
$$
$$
	M_2 - M_1
	\ge
	C
	(r_2 - r_1) r_2
	q (r_2) 
	\lim_{\mu \to M_1 - 0}
	\inf_{	
		[\mu, M_2]
	}
	h,
$$
$$
	M_2 - M_1
	\ge
	C
	(\tau_2 - \tau_1)
	q (r_2) 
	\lim_{\mu \to M_1 - 0}
	\inf_{	
		[\mu, M_2]
	}
	g,
$$
where
$$
	M_1
	=
	\sup_{
		Q_{r_1}^{\tau - \tau_1, \tau}
	}
	u
	\quad
	\mbox{and}
	\quad
	M_2
	=
	\sup_{
		Q_{r_2}^{\tau - \tau_2, \tau}
	}
	u.
$$
\end{Lemma}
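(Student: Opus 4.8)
I would prove this by a parabolic comparison argument based on Lemma~\ref{L3.1}, arguing by contradiction: suppose none of the three estimates holds, so that with a constant $C=C(n,\theta)$ to be fixed small, $M_2-M_1$ is strictly smaller than each of the three right‑hand sides, $M_i$ being as in the statement. By hypothesis $M_1>0$, and $M_2<\infty$ by continuity of $u$ on $\overline{Q_{r_2}^{\tau-\tau_2,\tau}}$. Pick a small $\varepsilon>0$ and then $\mu\in(0,M_1-\varepsilon)$ so close to $M_1$ that $g_\mu:=\inf_{[\mu,M_2]}g$ and $h_\mu:=\inf_{[\mu,M_2]}h$ are as near as we wish to the limits $\lim_{\nu\to M_1-0}\inf_{[\nu,M_2]}g$ and $\lim_{\nu\to M_1-0}\inf_{[\nu,M_2]}h$ of the statement (they increase to these limits as $\mu\to M_1-0$, and are positive since $[\mu,M_2]$ is a compact subset of $(0,\infty)$). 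Put $\omega=Q_{r_2}^{\tau-\tau_2,\tau}\cap\{u>\mu\}$. Then $u\ge\mu>0$ on $\overline{\omega}\supseteq\omega\cup\gamma(\omega)$, so Lemma~\ref{L3.1} is available on $\omega$; on $\Gamma(\omega)$ one has $u=\mu$ on the free part $\{u=\mu\}$ and $u\le M_2$ on the part lying in $B_{r_2}\times\{\tau-\tau_2\}\cup S_{r_2}\times(\tau-\tau_2,\tau)$, while $B_{r_2}\times\{\tau\}$ is an upper lid; and every point of $\overline{Q_{r_1}^{\tau-\tau_1,\tau}}$ at which $u>\mu$ lies in $\omega\cup\gamma(\omega)$.

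The barrier is taken radial in $x$ — this is forced by~\eqref{1.2} — of the form $w(x,t)=W(|x|)+V(t)$ with $W$ of class $C^2$, nondecreasing on $[0,r_2]$ and constant on $[0,r_1]$ with $W'(r_1)=0$, and $V$ of class $C^1$, nonincreasing on $[\tau-\tau_2,\tau]$ and constant on $[\tau-\tau_1,\tau]$. Then $Dw=\lambda x$ with $\lambda=W'(|x|)/|x|\ge0$, and since $u>0$ on $\omega$, \eqref{1.2} gives $f(x,u,Dw)\ge p(x)g(u)\bigl(1+\sum_{i,j}|a_{ij}(x,u)|\bigr)\ge q(r_2)g_\mu\bigl(1+\sum_{i,j}|a_{ij}|\bigr)$ whenever $0\le\lambda\le p(x)h(u)$. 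A routine computation using $\sum a_{ij}x_ix_j\le|x|^2\sum|a_{ij}|$ shows $Lw\le\max\{W''(|x|),W'(|x|)/|x|\}\bigl(1+\sum|a_{ij}|\bigr)$, so $w$ is a supersolution of $v\mapsto Lv-v_t-f(x,u,Dv)$ on $\omega$ — whence, by Lemma~\ref{L3.1}, $u\le w$ on $\omega\cup\gamma(\omega)$ as soon as $u\le w$ on $\Gamma(\omega)$ — provided
$$\max\{W''(r),W'(r)/r\}+|V'(t)|\le q(r_2)g_\mu,\qquad W'(r)/r\le q(r_2)h_\mu.$$
Requiring $w\ge M_2$ on $B_{r_2}\times\{\tau-\tau_2\}$ and on $S_{r_2}\times(\tau-\tau_2,\tau)$ amounts, by monotonicity of $W,V$, to $W(r_1)+\max V\ge M_2$ and $W(r_2)+\min V\ge M_2$; the minimum of $w$ on $\overline\omega$ equals $W(r_1)+\min V$, so requiring this to be $\ge\mu$ makes $w\ge u$ on the free boundary as well. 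Normalising $W(r_1)=M_2-\max V$, the second boundary inequality says the radial gain $W(r_2)-W(r_1)$ must be at least $R:=\max V-\min V$, and evaluating at an admissible $(x_0,t_0)\in\overline{Q_{r_1}^{\tau-\tau_1,\tau}}$ with $u(x_0,t_0)>M_1-\varepsilon$ gives, via $u\le w$,
$$M_1-\varepsilon<u(x_0,t_0)\le w(x_0,t_0)=W(r_1)+\min V=M_2-R.$$

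Thus a contradiction follows once I can take $R=M_2-M_1+\varepsilon$, and the constraints leave just enough room for this. Writing $\rho$ for the decay rate of $V$ (so $R\approx\rho(\tau_2-\tau_1)$), the first supersolution inequality gives $W''\le q(r_2)g_\mu-\rho$, hence $W'(r)\le(q(r_2)g_\mu-\rho)(r-r_1)$ as $W'(r_1)=0$; with $W'(r)/r\le q(r_2)h_\mu$ this bounds the radial gain by a constant multiple of $q(r_2)\min\{(g_\mu-\rho/q(r_2))(r_2-r_1)^2,\ h_\mu r_2(r_2-r_1)\}$, and imposing that this be $\ge R$ while $\rho<q(r_2)g_\mu$ and optimising over $\rho$ (using that $\bigl(1/(r_2-r_1)^2+1/(\tau_2-\tau_1)\bigr)^{-1}$ dominates half the minimum of $(r_2-r_1)^2$ and $\tau_2-\tau_1$) shows $R$ may be chosen as large as $C'q(r_2)\min\{g_\mu(r_2-r_1)^2,\ h_\mu r_2(r_2-r_1),\ g_\mu(\tau_2-\tau_1)\}$ for an absolute $C'=C'(n,\theta)$. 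With $C$ in the negated estimates chosen small relative to $C'$ and $\mu$ close enough to $M_1$, this exceeds $M_2-M_1+\varepsilon$, so the choice $R=M_2-M_1+\varepsilon$ is legitimate and produces the contradiction $M_1-\varepsilon<M_2-R=M_1-\varepsilon$. The step requiring the most care is precisely this coupled ``budget'' bookkeeping: the decay of $V$ and the radial growth of $W$ are both fed by the single term $pg$ in~\eqref{1.2}, and it is the constraint that the radial growth dominate $R$ which turns the conclusion into a minimum of three quantities rather than three separate bounds. The remaining points — smoothing $W$ near $|x|=r_1$ and $V$ near $t=\tau-\tau_1$ at the cost of harmless constants, and checking that $B_{r_2}\times\{\tau\}$ lies in $\gamma(\omega)$ and the other faces of the cylinder in $\Gamma(\omega)$ so that Lemma~\ref{L3.1} applies verbatim — are routine.
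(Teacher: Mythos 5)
Your proof is correct and follows essentially the same barrier argument as the paper: same domain $\omega = Q_{r_2}^{\tau - \tau_2, \tau} \cap \{u > \mu\}$, same additive barrier shape $W(|x|) + V(t)$ (the paper's $w$ is precisely such a sum built from a fixed cutoff $\varphi$), the same verification via~\eqref{1.2} that $w$ is a supersolution with the three quantities showing up in the coefficient constraints, Lemma~\ref{L3.1} to compare, and the passage $\mu \to M_1 - 0$ at the end. The only difference is a normalization of the barrier: the paper takes $w = 0$ on $Q_{r_1}^{\tau-\tau_1,\tau}$ and applies Lemma~\ref{L3.1} to the shifted function $v = u - \sup_\omega(u-w) + \varepsilon$ to extract an oscillation inequality, whereas you shift $w$ so that $w \ge M_2$ on $\Gamma(Q_{r_2}^{\tau-\tau_2,\tau})$ and compare $u$ with $w$ directly (arguing by contradiction), which is equivalent and trades the helper function for the explicit ``budget'' requirement $W(r_2) - W(r_1) \ge R$.
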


\begin{proof}
Assume that $\mu$ is a real number satisfying the condition
$
	0
	<
	\mu
	<
	M_1.
$
Also let $r_0 = (r_1 + r_2) / 2$ and $\varphi \in C^\infty ({\mathbb R})$ 
be a non-decreasing function  such that 
$$
	\left.
		\varphi 
	\right|_{
		(-\infty, 0]
	}
	= 
	0
	\quad
	\mbox{and}
	\quad
	\left.
		\varphi 
	\right|_{
		[1, \infty)
	}
	= 
	1.
$$ 
We denote
$$
	w (x, t)
	=
	k_1
	\varphi
	\left(
		\frac{
			|x| - r_0
		}{
			r_2 - r_0
		}
	\right)
	+
	k_2
	\varphi
	\left(
		\frac{
			\tau - \tau_1 - t
		}{
			\tau_2 - \tau_1
		}
	\right),
$$
where
$$
	k_1 
	= 
	\min 
	\left\{ 
		\frac{
			(r_2 - r_0)^2
			q (r_2)
			\inf_{	
				[\mu, M_2]
			}
				g
		}{
			2
			\| \varphi \|_{
				C^2 ([0, 1])
			}
		},
		\frac{
			(r_2 - r_0) r_2
			q (r_2) 
			\inf_{	
				[\mu, M_2]
			}
			h
		}{
			2
			\| \varphi \|_{
				C^1 ([0, 1])
			}
		}
	\right\}
$$
and
$$
	k_2
	=
	\frac{
		(\tau_2 - \tau_1)
		q (r_2) 
		\inf_{	
			[\mu, M_2]
		}
		g
	}{
		\| \varphi \|_{
			C^1 ([0, 1])
		}
	}.
$$
Further, let
$\Omega = \{ (x, t) \in {\mathbb R}_+^n  : u (x, t) > \mu \}$ 
and
$\omega = \Omega \cap Q_{r_2}^{\tau - \tau_2, \tau}$.
By direct calculation, it can be shown that
\begin{align}
	L w - w_t 
	=
	{}
	&
	k_1
	\varphi''
	\left(
		\frac{
			|x| - r_0
		}{
			r_2 - r_0
		}
	\right)
	\sum_{i.j=1}^n
	\frac{
		a_{ij} (x, u)
		x_i x_j
	}{
		|x|^2 (r_2 - r_0)^2
	}
	+
	k_1
	\varphi'
	\left(
		\frac{
			|x| - r_0
		}{
			r_2 - r_0
		}
	\right)
	\sum_{i=1}^n
	\frac{
		a_{ii} (x, u)
	}{
		|x| (r_2 - r_0)
	}
	\nonumber
	\\
	&
	-
	k_1
	\varphi'
	\left(
		\frac{
			|x| - r_0
		}{
			r_2 - r_0
		}
	\right)
	\sum_{i.j=1}^n
	\frac{
		a_{ij} (x, u)
		x_i x_j
	}{
		|x|^3 (r_2 - r_0)
	}
	+
	\frac{k_2}{\tau_2 - \tau_1}
	\varphi'
	\left(
		\frac{
			\tau - \tau_1 - t
		}{
			\tau_2 - \tau_1
		}
	\right)
	\nonumber
	\\
	\le
	{}
	&
	p (|x|)
	g (u)
	\left(
		1
		+
		\sum_{i,j=1}^n
		|a_{ij} (x, u)|
	\right)
	\label{PL3.2.2}
\end{align}
for all $(x, t) \in \omega \cup \gamma (\omega)$.
In so doing, we obviously have
$$
	D w 
	= 
	\frac{
		k_1 x
	}{
		|x| (r_2 - r_0)
	}
	\varphi'
	\left(
		\frac{
			|x| - r_0
		}{
			r_2 - r_0
		}
	\right)
$$
and
$$
	0
	\le
	\frac{
		k_1
	}{
		|x| (r_2 - r_0)
	}
	\varphi'
	\left(
		\frac{
			|x| - r_0
		}{
			r_2 - r_0
		}
	\right)
	\le
	p (|x|) h (u)
$$
for all $(x, t) \in \omega \cup \gamma (\omega)$,
whence in accordance with~\eqref{1.2} it follows that
$$
	f (x, u, D w)
	\ge
	p (|x|)
	g (u)
	\left(
		1
		+
		\sum_{i,j=1}^n
		|a_{ij} (x, u)|
	\right)
$$
for all $(x, t) \in \omega \cup \gamma (\omega)$.
Combining the last inequality with~\eqref{PL3.2.2}, we obtain
\begin{equation}
	L w - w_t
	\le 
	f (x, u, D w) 
	\quad
	\mbox{in } \omega \cup \gamma (\omega).
	\label{PL3.2.3}
\end{equation}
Let us show that
\begin{equation}
	\sup_{
		\overline{\omega}
		\cap
		\Gamma (Q_{r_2}^{\tau - \tau_2, \tau})
	}
	(u - w)
	\ge
	\sup_\omega
	{}
	(u - w).
	\label{PL3.2.4}
\end{equation}
Really, if~\eqref{PL3.2.4} is not valid, then
\begin{equation}
	\sup_{
		\overline{\omega}
		\cap
		\Gamma (Q_{r_2}^{\tau - \tau_2, \tau})
	}
	(u - w)
	<
	\sup_\omega
	{}
	(u - w)
	-
	\varepsilon
	\label{PL3.2.5}
\end{equation}
for some $\varepsilon > 0$.
Without loss of generality, it can also be assumed that
$
	\mu
	<
	M_1 - \varepsilon.
$
We denote
\begin{equation}
	v  
	= 
	u 
	-
	\sup_\omega
	{}
	(u - w)
	+
	\varepsilon.
	\label{PL3.2.6}
\end{equation}
From~\eqref{T2.2.1}, it follows that
$$
	L v - v_t \ge f (x, t, u, D v)
	\quad
	\mbox{in } \omega \cup \gamma (\omega).
$$
Combining this with~\eqref{PL3.2.3}, we immediately obtain~\eqref{L3.1.1}.
Let us now establish the validity of inequality~\eqref{L3.1.2}.
We have
\begin{equation}
	\Gamma (\omega) 
	\subset
	\left(
		\Gamma (\Omega) 
		\cap
		\overline{
			Q_{r_2}^{\tau - \tau_2, \tau}
		}
	\right)
	\cup
		\left(
		\overline{\omega}
		\cap
		\Gamma (Q_{r_2}^{\tau - \tau_2, \tau})
	\right)
	\label{PL3.2.7}
\end{equation}
and
\begin{equation}
	\left.
		u
	\right|_{
		\Gamma (\Omega) 
		\cap
		\overline{
			Q_{r_2}^{\tau - \tau_2, \tau}
		}
	}
	=
	\mu.
	\label{PL3.2.8}
\end{equation}
Relations~\eqref{PL3.2.6} and~\eqref{PL3.2.8} imply that
$$
	\left.
		v
	\right|_{
		\Gamma (\Omega) 
		\cap
		\overline{
			Q_{r_2}^{\tau - \tau_2, \tau}
		}
	}
	=
	\mu
	-
	\sup_\omega
	{}
	(u - w)
	+
	\varepsilon.
$$
In so doing,
$$
	\sup_\omega
	{}
	(u - w)
	\ge
	\sup_{
		\Omega \cap Q_{r_1}^{\tau - \tau_1, \tau}
	}
	u
	=
	M_1
$$
since 
$
	\Omega \cap Q_{r_1}^{\tau - \tau_1, \tau}
	\subset
	\omega
$
and $w$ is equal to zero on $Q_{r_1}^{\tau - \tau_1, \tau}$;
therefore, we obtain
$$
	\left.
		v
	\right|_{
		\Gamma (\Omega) 
		\cap
		\overline{
			Q_{r_2}^{\tau - \tau_2, \tau}
		}
	}
	\le
	\mu - M_1 + \varepsilon
	< 
	0.
$$
The last formula and the fact that $w$ is a non-negative function yield
$$
	v \le w
	\quad
	\mbox{on } 
	\Gamma (\Omega) 
	\cap
	\overline{
		Q_{r_2}^{\tau - \tau_2, \tau}
	}.
$$
At the same time, taking into account~\eqref{PL3.2.5} and~\eqref{PL3.2.6}, we have
$$
	\sup_{
		\overline{\omega}
		\cap
		\Gamma (Q_{r_2}^{\tau - \tau_2, \tau})
	}
	(v - w)
	=
	\sup_{
		\overline{\omega}
		\cap
		\Gamma (Q_{r_2}^{\tau - \tau_2, \tau})
	}
	(u - w)
	-
	\sup_\omega
	{}
	(u - w)
	+
	\varepsilon
	<
	0.
$$
Consequently, one can assert that~\eqref{L3.1.2} is fulfilled.
Thus, by Lemma~\ref{L3.1}, inequality~\eqref{L3.1.3} holds or, in other words,
$$
	u 
	-
	\sup_\omega
	{}
	(u - w)
	+
	\varepsilon
	\le
	w
	\quad
	\mbox{on } \omega \cup \gamma (\omega),
$$
whence it follows that
$$
	\sup_{
		\omega \cup \gamma (\omega)
	}
	(u - w)
	+
	\varepsilon
	\le
	\sup_\omega
	{}
	(u - w).
$$
This contradiction proves~\eqref{PL3.2.4}.

Since $w$ is equal to zero on $Q_{r_1}^{\tau - \tau_1, \tau}$, 
formula~\eqref{PL3.2.4} implies the estimate
$$
	\sup_{
		\overline{\omega}
		\cap
		\Gamma (Q_{r_2}^{\tau - \tau_2, \tau})
	}
	(u - w)
	\ge
	\sup_{
		\omega \cap Q_{r_1}^{\tau - \tau_1, \tau}
	}
	u
$$
from which, by the relations
$$
	\sup_{
		\overline{\omega}
		\cap
		\Gamma (Q_{r_2}^{\tau - \tau_2, \tau})
	}
	u
	-
	\inf_{
		\overline{\omega}
		\cap
		\Gamma (Q_{r_2}^{\tau - \tau_2, \tau})
	}
	w
	\ge
	\sup_{
		\overline{\omega}
		\cap
		\Gamma (Q_{r_2}^{\tau - \tau_2, \tau})
	}
	(u - w)
$$
and
$$
	\sup_{
		\omega \cap Q_{r_1}^{\tau - \tau_1, \tau}
	}
	u
	=
	M_1,
$$
we obtain
\begin{equation}
	\sup_{
		\overline{\omega}
		\cap
		\Gamma (Q_{r_2}^{\tau - \tau_2, \tau})
	}
	u
	-
	\inf_{
		\overline{\omega}
		\cap
		\Gamma (Q_{r_2}^{\tau - \tau_2, \tau})
	}
	w
	\ge
	M_1.
	\label{PL3.2.9}
\end{equation}
From Corollary~\ref{C3.1}, it follows that
$$
	\sup_{
		\Gamma (\omega)
	}
	u
	=
	\sup_{
		\overline{\omega}
	}
	u
	=
	M_2.
$$
In addition,~\eqref{PL3.2.8} implies the inequality
$$
	\left.
		u
	\right|_{
		\Gamma (\Omega) 
		\cap
		\overline{
			Q_{r_2}^{\tau - \tau_2, \tau}
		}
	}
	<
	M_2;
$$
therefore, inclusion~\eqref{PL3.2.7} allows us to assert that
$$
	\sup_{
		\overline{\omega}
		\cap
		\Gamma (Q_{r_2}^{\tau - \tau_2, \tau})
	}
	u
	=
	\sup_{
		\Gamma (\omega)
	}
	u
	=
	M_2.
$$
Combining this with~\eqref{PL3.2.9}, we obtain
$$
	M_2
	-
	M_1
	\ge
	\inf_{
		\Gamma (Q_{r_2}^{\tau - \tau_2, \tau})
	}
	w.
$$
To complete the proof, it remains to note that
$$
	\inf_{
		\Gamma (Q_{r_2}^{\tau - \tau_2, \tau})
	}
	w
	\ge
	\min \{ k_1, k_2 \}.
$$
\end{proof}

\begin{Lemma}\label{L3.3}
Let $r > 0$ and $t > 0$ be real numbers such that $4 r^2 < t$ and
\begin{equation}
	\sup_{
		Q_{2 r}^{t - 4 r^2, t}
	}
	u
	\ge
	\theta^{1/2}
	\sup_{
		Q_r^{t - r^2, t}
	}
	u
	>
	0.
	\label{L3.3.1}
\end{equation}
Then at least one of the following two estimates is valid:
\begin{equation}
	\int_{M_1}^{M_2}
	(g_\theta (\zeta) \zeta)^{- 1/2}
	\,
	d\zeta
	\ge
	C
	\int_r^{2 r}
	q^{1/2} (2 \rho)
	\,
	d\rho,
	\label{L3.3.2}
\end{equation}
\begin{equation}
	\int_{M_1}^{M_2}
	\frac{
		d\zeta
	}{
		h_\theta (\zeta)
	}
	\ge
	C
	\int_r^{2 r}
	\rho q (2 \rho)
	\,
	d\rho,
	\label{L3.3.3}
\end{equation}
where
\begin{equation}
	M_1
	=
	\sup_{
		Q_r^{t - r^2, t}
	}
	u
	\quad
	\mbox{and}
	\quad
	M_2
	=
	\sup_{
		Q_{2 r}^{t - 4 r^2, t}
	}
	u.
	\label{L3.3.4}
\end{equation}
\end{Lemma}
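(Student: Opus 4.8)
The plan is to turn the single-scale trichotomy of Lemma~\ref{L3.2} into the integral estimate by a telescoping over a suitable dyadic-type partition of the radial interval. First I would introduce the function $M(\rho)=\sup_{Q_\rho^{t-\rho^2,t}}u$ for $\rho\in[r,2r]$. It is non-decreasing because the cylinders are nested, it is continuous because $u$ is continuous and the cylinders vary continuously, and $M(r)=M_1$, $M(2r)=M_2\ge\theta^{1/2}M_1>0$ by~\eqref{L3.3.1} and~\eqref{L3.3.4}. Using this continuity together with $M_2\ge\theta^{1/2}M_1$, I would pick a finite partition $r=\rho_0<\rho_1<\cdots<\rho_N=2r$ with $\theta^{1/2}\le M(\rho_{k+1})/M(\rho_k)<\theta$ for every $k$ (take $\rho_{k+1}$ to be the first radius at which $M$ attains the value $\theta^{1/2}M(\rho_k)$, and, if needed, merge the last two steps so that the final ratio also lies in $[\theta^{1/2},\theta)$). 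Writing $M^{(k)}=M(\rho_k)$ and $\tau_k=\rho_k^2$, the intervals $[\rho_k,\rho_{k+1}]$ tile $[r,2r]$ and the intervals $[M^{(k)},M^{(k+1)}]$ tile $[M_1,M_2]$.

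Next, for each $k$ I would apply Lemma~\ref{L3.2} with $r_1=\rho_k$, $r_2=\rho_{k+1}$, $\tau_1=\tau_k$, $\tau_2=\tau_{k+1}$, $\tau=t$; this is admissible since $\tau_k<\tau_{k+1}<4r^2<t$ and $\sup_{Q_{\rho_k}^{t-\tau_k,t}}u=M^{(k)}\ge M_1>0$. Because $\tau_{k+1}-\tau_k=(\rho_{k+1}+\rho_k)(\rho_{k+1}-\rho_k)\ge(\rho_{k+1}-\rho_k)^2$, the third estimate of Lemma~\ref{L3.2} implies the first, so for every $k$ one of
$$
M^{(k+1)}-M^{(k)}\ge C(\rho_{k+1}-\rho_k)^2q(\rho_{k+1})G_k,\qquad
M^{(k+1)}-M^{(k)}\ge C(\rho_{k+1}-\rho_k)\rho_{k+1}q(\rho_{k+1})H_k
$$
holds, where $G_k=\lim_{\mu\to M^{(k)}-0}\inf_{[\mu,M^{(k+1)}]}g$ and $H_k$ is the analogous quantity for $h$. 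I call $k$ a $g$-index in the first case and an $h$-index in the second. Three elementary remarks are then used: (i) since all $\rho_j\in[r,2r]$ we have $\rho_{k+1}\le2\rho_k\le2\rho$ for $\rho\in[\rho_k,\rho_{k+1}]$, hence $q(\rho_{k+1})\ge q(2\rho)$ there; (ii) since $M^{(k+1)}<\theta M^{(k)}\le\theta\zeta$ and $\zeta/\theta<M^{(k)}$ for $\zeta\in[M^{(k)},M^{(k+1)}]$, letting $\mu\to M^{(k)}-0$ gives $[\mu,M^{(k+1)}]\subset(\zeta/\theta,\theta\zeta)$, so $G_k\ge g_\theta(\zeta)$ and $H_k\ge h_\theta(\zeta)$ for all such $\zeta$; (iii) since $M^{(k)}\le\theta^{-1/2}M^{(k+1)}$ we get $M^{(k+1)}\le(1-\theta^{-1/2})^{-1}(M^{(k+1)}-M^{(k)})$.

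Combining (i)--(iii) with the two inequalities above I would obtain, for a $g$-index,
$$
\int_{M^{(k)}}^{M^{(k+1)}}(g_\theta(\zeta)\zeta)^{-1/2}\,d\zeta
\ge\frac{M^{(k+1)}-M^{(k)}}{(G_kM^{(k+1)})^{1/2}}
\ge C\Bigl(\frac{M^{(k+1)}-M^{(k)}}{G_k}\Bigr)^{1/2}
\ge Cq^{1/2}(\rho_{k+1})(\rho_{k+1}-\rho_k)
\ge C\int_{\rho_k}^{\rho_{k+1}}q^{1/2}(2\rho)\,d\rho,
$$
and, for an $h$-index,
$$
\int_{M^{(k)}}^{M^{(k+1)}}\frac{d\zeta}{h_\theta(\zeta)}
\ge\frac{M^{(k+1)}-M^{(k)}}{H_k}
\ge C(\rho_{k+1}-\rho_k)\rho_{k+1}q(\rho_{k+1})
\ge C\int_{\rho_k}^{\rho_{k+1}}\rho q(2\rho)\,d\rho .
$$
Summing and using that the intervals $[M^{(k)},M^{(k+1)}]$ tile $[M_1,M_2]$ yields
$\int_{M_1}^{M_2}(g_\theta\zeta)^{-1/2}d\zeta\ge C\sum_{g\text{-ind.}}\int_{\rho_k}^{\rho_{k+1}}q^{1/2}(2\rho)d\rho$
and
$\int_{M_1}^{M_2}h_\theta^{-1}d\zeta\ge C\sum_{h\text{-ind.}}\int_{\rho_k}^{\rho_{k+1}}\rho q(2\rho)d\rho$,
and one concludes by deciding whether the $g$-indices account for at least half of $\int_r^{2r}q^{1/2}(2\rho)d\rho$, in which case~\eqref{L3.3.2} holds, or the $h$-indices account for a comparable portion of $\int_r^{2r}\rho q(2\rho)d\rho$, in which case~\eqref{L3.3.3} holds.

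The step I expect to be the main obstacle is precisely this last reconciliation: the two summed estimates carry different radial weights ($q^{1/2}(2\rho)$ versus $\rho q(2\rho)$), so a crude majority argument need not immediately produce one of~\eqref{L3.3.2},~\eqref{L3.3.3}; closing the gap requires exploiting the monotonicity of $q$ on $[r,2r]$ and a Cauchy--Schwarz-type comparison of $\int q^{1/2}$ with $\int\rho q$ (equivalently, refining the $h$-indexed intervals, or choosing the partition with $q$ also in mind). By contrast, the continuity and monotonicity of $M(\rho)$, the existence of the partition with ratios in $[\theta^{1/2},\theta)$, the reduction of the trichotomy to a dichotomy, and the estimates $G_k\ge g_\theta(\zeta)$, $H_k\ge h_\theta(\zeta)$ are routine.
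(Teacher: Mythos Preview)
Your approach mirrors the paper's almost exactly: the same partition of $[M_1,M_2]$ with ratios in $[\theta^{1/2},\theta)$, the same application of Lemma~\ref{L3.2} on each piece, the same reduction of the trichotomy to a dichotomy via $\tau_{k+1}-\tau_k\ge(\rho_{k+1}-\rho_k)^2$, and essentially the same per-step integral estimates. The only substantive difference is the final reconciliation step, which you correctly flag as the obstacle---and the obstacle is in fact self-inflicted.

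By invoking remark~(i) to pass from $(\rho_{k+1}-\rho_k)\,q^{1/2}(\rho_{k+1})$ to the smaller quantity $\int_{\rho_k}^{\rho_{k+1}}q^{1/2}(2\rho)\,d\rho$ \emph{before} summing, you discard the uniform lower bound $q(\rho_{k+1})\ge q(2r)$ that makes a clean majority argument possible. The paper instead keeps the product form and runs the majority on the bare lengths: either $\sum_{g\text{-ind.}}(\rho_{k+1}-\rho_k)\ge r/2$ or $\sum_{h\text{-ind.}}(\rho_{k+1}-\rho_k)\ge r/2$. In the first case,
\[
\int_{M_1}^{M_2}(g_\theta(\zeta)\zeta)^{-1/2}\,d\zeta
\;\ge\; C\sum_{g\text{-ind.}}(\rho_{k+1}-\rho_k)\,q^{1/2}(\rho_{k+1})
\;\ge\; C\,q^{1/2}(2r)\cdot\tfrac{r}{2},
\]
and since $q$ is non-increasing, $\int_r^{2r}q^{1/2}(2\rho)\,d\rho\le r\,q^{1/2}(2r)$, which yields~\eqref{L3.3.2}. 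The $h$-case is handled identically, using $\rho_{k+1}\ge r$ together with $\int_r^{2r}\rho\,q(2\rho)\,d\rho\le 2r^{2}q(2r)$, and gives~\eqref{L3.3.3}. No Cauchy--Schwarz comparison or refinement of the partition is needed---only the monotonicity of $q$ that you already used in~(i), but applied once at the endpoint $2r$ rather than pointwise inside each subinterval.
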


\begin{proof}
Let $k$ be the maximal positive integer for which $\theta^{k/2} M_1 \le M_2$.
We put 
$
	m_i 
	= 
	\theta^{i/2}
	M_1,
$
$
	i = 0, \ldots, k - 1,
$
and $m_k = M_2$. 
It can easily be seen that
$$
	\theta^{1/2} m_i \le m_{i+1} < \theta m_i,
	\quad
	i = 0, \ldots, k - 1.
$$
Let us further take an increasing sequence of real numbers 
$\{ r_i \}_{i=0}^k$ 
such that
$r_0 = r$, 
$r_k = 2r$, 
and
$$
	\sup_{
		Q_{r_i}^{t - r_i^2, t}
	}
	u
	=
	m_i,
	\quad
	i = 1, \ldots, k - 1.
$$
Since $u$ is a continuous function in ${\mathbb R}_+^n$, such a sequence obviously exists.

By Lemma~\ref{L3.2}, for any $i \in \{ 0, \ldots, k - 1 \}$ 
at least one of the following three inequalities is valid:
\begin{equation}
	m_{i+1} - m_i
	\ge
	C
	(r_{i+1} - r_i)^2
	q (r_{i+1}) 
	\lim_{\mu \to m_i - 0}
	\inf_{	
		[\mu, m_{i+1}]
	}
	g,
	\label{PL3.3.1}
\end{equation}
\begin{equation}
	m_{i+1} - m_i
	\ge
	C
	(r_{i+1} - r_i) r_{i+1}
	q (r_{i+1}) 
	\lim_{\mu \to m_i - 0}
	\inf_{	
		[\mu, m_{i+1}]
	}
	h,
	\label{PL3.3.2}
\end{equation}
\begin{equation}
	m_{i+1} - m_i
	\ge
	C
	(r_{i+1}^2 - r_i^2)
	q (r_{i+1}) 
	\lim_{\mu \to m_i - 0}
	\inf_{	
		[\mu, m_{i+1}]
	}
	g.
	\label{PL3.3.3}
\end{equation}
If~\eqref{PL3.3.1} is valid, then we have
$$
	\left(
		\frac{
			m_{i+1} - m_i
		}{
			\lim_{\mu \to m_i - 0}
			\inf_{	
				[\mu, m_{i+1}]
			}
			g
		}
	\right)^{1/2}
	\ge
	C
	(r_{i+1} - r_i)
	q^{1/2} (r_{i+1}).
$$
Since
$$
	\int_{
		m_i
	}^{
		m_{i+1}
	}
	(g_\theta (\zeta) \zeta)^{- 1/2}
	\,
	d\zeta
	\ge
	C
	\left(
		\frac{
			m_{i+1} - m_i
		}{
			\lim_{\mu \to m_i - 0}
			\inf_{	
				[\mu, m_{i+1}]
			}
			g
		}
	\right)^{1/2},
$$
this implies the estimate
\begin{equation}
	\int_{
		m_i
	}^{
		m_{i+1}
	}
	(g_\theta (\zeta) \zeta)^{- 1/2}
	\,
	d\zeta
	\ge
	C
	(r_{i+1} - r_i)
	q^{1/2} (r_{i+1}).
	\label{PL3.3.4}
\end{equation}
In turn, if~\eqref{PL3.3.2} holds, then
$$
	\frac{
		m_{i+1} - m_i
	}{
		\lim_{\mu \to m_i - 0}
		\inf_{	
			[\mu, m_{i+1}]
		}
		h
	}
	\ge
	C
	(r_{i+1} - r_i) 
	r_{i+1}
	q (r_{i+1}),
$$
whence in accordance with the inequality
$$
	\int_{
		m_i
	}^{
		m_{i+1}
	}
	\frac{
		d\zeta
	}{
		h_\theta (\zeta)
	}
	\ge
	\frac{
		C (m_{i+1} - m_i)
	}{
		\lim_{\mu \to m_i - 0}
		\inf_{	
			[\mu, m_{i+1}]
		}
		h
	}
$$
we obtain
\begin{equation}
	\int_{
		m_i
	}^{
		m_{i+1}
	}
	\frac{
		d\zeta
	}{
		h_\theta (\zeta)
	}
	\ge
	C
	(r_{i+1} - r_i) 
	r_{i+1}
	q (r_{i+1}).
	\label{PL3.3.5}
\end{equation}
Let us also note that~\eqref{PL3.3.3} implies~\eqref{PL3.3.1}; 
therefore, in this case, we again arrive at~\eqref{PL3.3.4}.
Thus, for any $i \in \{ 0, \ldots, k - 1 \}$ at least one of 
estimates~\eqref{PL3.3.4}, \eqref{PL3.3.5} is valid.
We denote by $\Xi_1$ the set of integers $i \in \{ 0, \ldots, k - 1 \}$ 
for which~\eqref{PL3.3.4} holds. 
In so doing, let $\Xi_2 = \{ 0, \ldots, k - 1 \} \setminus \Xi_1$.

At first assume that
\begin{equation}
	\sum_{i \in \Xi_1}
	(r_{i+1} - r_i) 
	\ge
	\frac{r_k - r_0}{2}.
	\label{PL3.3.6}
\end{equation}
Then, summing~\eqref{PL3.3.4} over all $i \in \Xi_1$, we have
$$
	\int_{
		M_1
	}^{
		M_2
	}
	(g_\theta (\zeta) \zeta)^{- 1/2}
	\,
	d\zeta
	\ge
	C
	(r_k - r_0)
	q^{1/2} (r_k).
$$
This implies~\eqref{L3.3.2}.

Now, let~\eqref{PL3.3.6} is not valid. Then
$$
	\sum_{i \in \Xi_2}
	(r_{i+1} - r_i) 
	\ge
	\frac{r_k - r_0}{2};
$$
therefore, summing~\eqref{PL3.3.5} over all $i \in \Xi_2$, we conclude that
$$
	\int_{
		M_1
	}^{
		M_2
	}
	\frac{
		d\zeta
	}{
		h_\theta (\zeta)
	}
	\ge
	C
	(r_k - r_0) 
	r_0
	q (r_k),
$$
whence~\eqref{L3.3.3} immediately follows.

The proof is completed.
\end{proof}

\begin{Lemma}\label{L3.4}
In the hypotheses of Lemma~$\ref{L3.3}$, let the inequality
$$
	\theta^{1/2}
	\sup_{
		Q_r^{t - r^2, t}
	}
	u
	\ge
	\sup_{
		Q_{2 r}^{t - 4 r^2, t}
	}
	u
	>
	0
$$
be fulfilled instead of~\eqref{L3.3.1}.
Then at least one of the following two estimates is valid:
\begin{equation}
	\int_{M_1}^{M_2}
	\frac{
		d\zeta
	}{
		g_{\sqrt{\theta}} (\zeta)
	}
	\ge
	C
	\int_r^{2 r}
	\rho q (2 \rho)
	\,
	d\rho,
	\label{L3.4.1}
\end{equation}
\begin{equation}
	\int_{M_1}^{M_2}
	\frac{
		d\zeta
	}{
		h_{\sqrt{\theta}} (\zeta)
	}
	\ge
	C
	\int_r^{2 r}
	\rho q (2 \rho)
	\,
	d\rho,
	\label{L3.4.2}
\end{equation}
where $M_1$ and $M_2$ are defined by~\eqref{L3.3.4}.
\end{Lemma}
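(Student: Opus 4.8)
The plan is to deduce the lemma from a single application of Lemma~\ref{L3.2}; unlike in the proof of Lemma~\ref{L3.3}, no geometric iteration over a sequence of radii is needed here, because the ratio $M_2/M_1$ is already controlled by the hypothesis $M_2 \le \theta^{1/2} M_1$. First I would apply Lemma~\ref{L3.2} with $r_1 = r$, $r_2 = 2 r$, $\tau_1 = r^2$, $\tau_2 = 4 r^2$, and $\tau = t$. The conditions $0 < r_1 < r_2$ and $0 < \tau_1 < \tau_2 < \tau$ follow from $4 r^2 < t$, while the requirement $\sup_{Q_{r_1}^{\tau - \tau_1, \tau}} u > 0$ holds since, by hypothesis, $M_1 = \sup_{Q_r^{t - r^2, t}} u \ge \theta^{-1/2}\sup_{Q_{2 r}^{t - 4 r^2, t}} u > 0$. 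With this choice of parameters the quantities $M_1$, $M_2$ of Lemma~\ref{L3.2} coincide with those defined in~\eqref{L3.3.4}, and since $(r_2 - r_1)^2 = r^2$, $(r_2 - r_1) r_2 = 2 r^2$, $\tau_2 - \tau_1 = 3 r^2$, and $q (r_2) = q (2 r)$, Lemma~\ref{L3.2} tells us that at least one of
$$
	M_2 - M_1 \ge C r^2 q (2 r) \lim_{\mu \to M_1 - 0} \inf_{[\mu, M_2]} g
	\qquad \mbox{and} \qquad
	M_2 - M_1 \ge C r^2 q (2 r) \lim_{\mu \to M_1 - 0} \inf_{[\mu, M_2]} h
$$
holds (the first and the third estimate of Lemma~\ref{L3.2} being of the same form after absorbing constants).

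Next I would pass from $\lim_{\mu \to M_1 - 0} \inf_{[\mu, M_2]} g$ to $g_{\sqrt{\theta}}$. The key observation is that, since $M_2 \le \theta^{1/2} M_1$, for every $\zeta \in (M_1, M_2)$ one has $\zeta / \sqrt{\theta} < M_1$ and $\sqrt{\theta}\, \zeta > M_2$; hence for any $\mu$ with $\zeta / \sqrt{\theta} < \mu < M_1$ the interval $(\zeta / \sqrt{\theta}, \sqrt{\theta}\, \zeta)$ contains $[\mu, M_2]$, so that $g_{\sqrt{\theta}} (\zeta) \le \inf_{[\mu, M_2]} g$, and letting $\mu \to M_1 - 0$ gives $g_{\sqrt{\theta}} (\zeta) \le \lim_{\mu \to M_1 - 0} \inf_{[\mu, M_2]} g$; the same reasoning applies to $h$. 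Consequently, in the first case above,
$$
	\int_{M_1}^{M_2} \frac{d\zeta}{g_{\sqrt{\theta}} (\zeta)}
	\ge
	\frac{M_2 - M_1}{\lim_{\mu \to M_1 - 0} \inf_{[\mu, M_2]} g}
	\ge
	C r^2 q (2 r),
$$
and in the second case the analogous bound holds with $h_{\sqrt{\theta}}$ in place of $g_{\sqrt{\theta}}$.

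Finally, since $q$ is non-increasing, $\int_r^{2 r} \rho q (2 \rho)\, d\rho \le q (2 r) \int_r^{2 r} \rho\, d\rho = \frac{3}{2} r^2 q (2 r)$, so the two estimates just obtained imply~\eqref{L3.4.1} and~\eqref{L3.4.2}, respectively, after absorbing constants into $C$; the degenerate case $M_1 = M_2$ is immediate, since then whichever inequality of Lemma~\ref{L3.2} holds forces $q (2 r) = 0$ and both sides of~\eqref{L3.4.1}, \eqref{L3.4.2} vanish. I do not anticipate a real obstacle here: the only points requiring care are the handling of the open interval in the definition of $\varphi_\theta$ (resolved by taking $\zeta \in (M_1, M_2)$ and exploiting the slack in $M_2 \le \theta^{1/2} M_1$) and the routine bookkeeping of the generic constant $C$ when comparing $r^2 q (2 r)$ with $\int_r^{2 r} \rho q (2 \rho)\, d\rho$.
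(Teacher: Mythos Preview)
Your proposal is correct and follows essentially the same approach as the paper: a single application of Lemma~\ref{L3.2} with $r_1=r$, $r_2=2r$, $\tau_1=r^2$, $\tau_2=4r^2$, followed by the passage from $\lim_{\mu\to M_1-0}\inf_{[\mu,M_2]}g$ to $g_{\sqrt\theta}$ via the inclusion $[\mu,M_2]\subset(\zeta/\sqrt\theta,\sqrt\theta\,\zeta)$ guaranteed by $M_2\le\theta^{1/2}M_1$. Your write-up is in fact somewhat more explicit than the paper's in justifying that inclusion, in converting $r^2 q(2r)$ to $\int_r^{2r}\rho q(2\rho)\,d\rho$ via the monotonicity of $q$, and in disposing of the degenerate case $M_1=M_2$.
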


\begin{proof}
By Lemma~\ref{L3.2}, we obviously obtain either
\begin{equation}
	M_2 - M_1
	\ge
	C
	r^2
	q (2 r) 
	\lim_{\mu \to M_1 - 0}
	\inf_{	
		[\mu, M_2]
	}
	g
	\label{PL3.4.1}
\end{equation}
or
\begin{equation}
	M_2 - M_1
	\ge
	C
	r^2
	q (2 r) 
	\lim_{\mu \to M_1 - 0}
	\inf_{	
		[\mu, M_2]
	}
	h.
	\label{PL3.4.2}
\end{equation}
If~\eqref{PL3.4.1} holds, then
$$
	\frac{
		M_2 - M_1
	}{
		\lim_{\mu \to M_1 - 0}
		\inf_{	
			[\mu, M_2]
		}
		g
	}
	\ge
	C
	r^2
	q (2 r).
$$
Thus, taking into account the inequality
$$
	\int_{M_1}^{M_2}
	\frac{
		d\zeta
	}{
		g_{\sqrt{\theta}} (\zeta)
	}
	\ge
	C
	\frac{
		M_2 - M_1
	}{
		\lim_{\mu \to M_1 - 0}
		\inf_{	
			[\mu, M_2]
		}
		g
	},
$$
we can assert that
$$
	\int_{M_1}^{M_2}
	\frac{
		d\zeta
	}{
		g_{\sqrt{\theta}} (\zeta)
	}
	\ge
	C
	r^2
	q (2 r),
$$
whence~\eqref{L3.4.1} follows at once.

Analogously,~\eqref{PL3.4.2} implies the estimate
$$
	\int_{M_1}^{M_2}
	\frac{
		d\zeta
	}{
		h_{\sqrt{\theta}} (\zeta)
	}
	\ge
	C
	r^2
	q (2 r)
$$
from which~\eqref{L3.4.2} can be obtained.

The proof is completed.
\end{proof}

\begin{Lemma} \label{L3.5}
Let
$0 < \alpha \le 1$,
$\sigma > 1$,
$\nu > 1$,
$M_1 > 0$,
and
$M_2 > 0$
be real numbers with
$M_2 \ge \nu M_1$.
Then
$$
	\left(
		\int_{M_1}^{M_2}
		\psi_\sigma^{-\alpha} (s)
		s^{\alpha - 1}
		\,
		ds
	\right)^{1 / \alpha}
	\ge
	A
	\int_{M_1}^{M_2}
	\frac{
		ds
	}{
		\psi (s)
	}
$$
for any measurable function $\psi : (0,\infty) \to (0,\infty)$ such that
$\psi_\sigma (s) > 0$ for all $s \in (0,\infty)$,
where $A > 0$ is a constant depending only on $\alpha$, $\nu$, and $\sigma$.
\end{Lemma}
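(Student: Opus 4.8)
The plan is to estimate both integrals against one and the same finite geometric sum, obtained from a partition of $[M_1,M_2]$ whose breakpoints run in geometric progression. Fix once and for all a number $\tau$ with $1<\tau<\min\{\nu,\sqrt{\sigma}\}$; such $\tau$ exists and can be taken as a function of $\nu$ and $\sigma$ only, since $\nu>1$ and $\sigma>1$. The inequality $\tau<\nu$ will guarantee that the partition below is non-degenerate (indeed $M_2\ge\nu M_1>\tau M_1$), while $\tau<\sqrt{\sigma}$, i.e.\ $\tau^2<\sigma$, is exactly what lets us dominate $\psi_\sigma$ on a partition interval by the infimum of $\psi$ over a slightly larger interval.

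First I would set up the partition. Let $N\ge 1$ be the largest integer with $\tau^N M_1\le M_2$; put $a_k=\tau^k M_1$ for $0\le k\le N-1$ and $a_N=M_2$, so that $[M_1,M_2]=\bigcup_{k=0}^{N-1}[a_k,a_{k+1}]$, where $a_{k+1}/a_k=\tau$ for $k<N-1$ and $a_N/a_{N-1}\in[\tau,\tau^2)$ by maximality of $N$; in particular $\tau\le a_{k+1}/a_k\le\tau^2$ for every $k$. For $k<N-1$ set $W_k=[a_k,\tau a_k]$, and set $W_{N-1}=[a_{N-1},\tau^2 a_{N-1}]$, so that $[a_k,a_{k+1}]\subseteq W_k$; let $\mu_k=\inf_{W_k}\psi$. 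A short computation with the ratios, using $\tau<\sigma$ for $k<N-1$ and $\tau^2<\sigma$ for $k=N-1$, shows that $W_k\subseteq(s/\sigma,\sigma s)$ for every $s\in[a_k,a_{k+1}]$; hence $\psi_\sigma(s)\le\mu_k$ on $[a_k,a_{k+1}]$, and taking $s=a_k$ gives $\mu_k\ge\psi_\sigma(a_k)>0$. Combined with the trivial bound $\psi\ge\mu_k$ on $[a_k,a_{k+1}]$, this yields the pointwise estimates $\psi_\sigma^{-\alpha}(s)\ge\mu_k^{-\alpha}$ and $\psi^{-1}(s)\le\mu_k^{-1}$ there.

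Integrating these over each $[a_k,a_{k+1}]$ and summing, and writing $b_k=a_k/\mu_k>0$, I obtain on one side
$$
	\int_{M_1}^{M_2}\psi_\sigma^{-\alpha}(s)\,s^{\alpha-1}\,ds
	\ge
	\sum_{k=0}^{N-1}\mu_k^{-\alpha}\,\frac{a_{k+1}^\alpha-a_k^\alpha}{\alpha}
	\ge
	\frac{\tau^\alpha-1}{\alpha}\sum_{k=0}^{N-1}b_k^\alpha
$$
(using $a_{k+1}/a_k\ge\tau$), and on the other
$$
	\int_{M_1}^{M_2}\frac{ds}{\psi(s)}
	\le
	\sum_{k=0}^{N-1}\mu_k^{-1}(a_{k+1}-a_k)
	\le
	(\tau^2-1)\sum_{k=0}^{N-1}b_k
$$
(using $a_{k+1}/a_k\le\tau^2$). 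It then remains only to invoke the elementary inequality $\bigl(\sum_k b_k^\alpha\bigr)^{1/\alpha}\ge\sum_k b_k$ for $0<\alpha\le 1$, equivalently $\sum_k b_k^\alpha\ge\bigl(\sum_k b_k\bigr)^\alpha$, which follows termwise from $b_k^{\alpha-1}\ge\bigl(\sum_j b_j\bigr)^{\alpha-1}$ since $\alpha-1\le 0$. Combining the three displays gives the assertion with $A=(\tau^2-1)^{-1}\bigl((\tau^\alpha-1)/\alpha\bigr)^{1/\alpha}$, a positive constant depending only on $\alpha$, $\nu$, $\sigma$.

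The main obstacle, and the reason the argument is a little more than a one-line dyadic estimate, is the rightmost interval $[a_{N-1},M_2]$: since $M_2$ need not be an integer power of $\tau$ times $M_1$, this interval contributes no lower bound to the $\psi_\sigma$-integral comparable to its contribution to the $\psi^{-1}$-integral unless $\mu_{N-1}$ is measured over the widened window $[a_{N-1},\tau^2 a_{N-1}]$ — and this widening is precisely what forces the requirement $\tau^2<\sigma$, hence the choice $\tau<\sqrt{\sigma}$. A secondary point is that $\psi_\sigma$ need not be measurable; this is harmless, because we only use that $\psi_\sigma^{-\alpha}$ dominates the measurable step function $\sum_k\mu_k^{-\alpha}\mathbf 1_{[a_k,a_{k+1}]}$, so the first display holds with its left-hand side read as a lower integral. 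I would expect verifying the inclusion $W_k\subseteq(s/\sigma,\sigma s)$ and pinning down the normalization of $\tau$ in terms of $\nu$ and $\sigma$ to be the only steps where genuine care is needed.
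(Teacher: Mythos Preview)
The paper does not actually prove this lemma; it merely cites \cite[Lemma~2.3]{meIzv}. Your argument is a correct self-contained proof. The geometric partition with step $\tau<\sqrt{\sigma}$ is exactly the right device: it makes each partition interval sit inside the $\sigma$-window $(s/\sigma,\sigma s)$ of every point $s$ in that interval, so $\psi_\sigma$ is dominated there by the single number $\mu_k$; the $\ell^\alpha$--$\ell^1$ inequality $(\sum b_k^\alpha)^{1/\alpha}\ge\sum b_k$ then does the rest. Your treatment of the last subinterval (widening $W_{N-1}$ to $[a_{N-1},\tau^2 a_{N-1}]$ and hence needing $\tau^2<\sigma$) is the correct way to handle the non-integer remainder, and the constant $A=(\tau^2-1)^{-1}\bigl((\tau^\alpha-1)/\alpha\bigr)^{1/\alpha}$ indeed depends only on $\alpha,\nu,\sigma$ through the choice of $\tau$.

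One small remark: your caveat about the measurability of $\psi_\sigma$ is unnecessary. For any measurable $\psi$ one has $\{s:\psi_\sigma(s)<c\}=\bigcup_{t\in\{\psi<c\}}(t/\sigma,\sigma t)$, a union of open intervals, so $\psi_\sigma$ is upper semicontinuous and in particular Borel measurable; the integral on the left is an honest Lebesgue integral.
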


\begin{Lemma} \label{L3.6}
Let
$0 < \alpha \le 1$,
$\sigma > 1$,
$\nu > 1$,
$r_1 > 0$,
and
$r_2 > 0$
be real numbers with
$r_2 \ge \nu r_1$.
Then
$$
	\left(
		\int_{r_1}^{r_2}
		\varphi^\alpha (r)
		\,
		dr
	\right)^{1 / \alpha}
	\ge
	A
	\int_{r_1}^{r_2}
	r^{1 / \alpha - 1}
	\varphi_\sigma (r)
	\,
	dr
$$
for any measurable function $\varphi : [r_1, r_2] \to [0,\infty)$,
where $A > 0$ is a constant depending only on $\alpha$, $\nu$, and $\sigma$.
\end{Lemma}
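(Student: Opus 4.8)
The plan is to decompose $[r_1,r_2]$ into a bounded number of geometrically scaled subintervals, bound the left-hand side from below on each of them, and then recombine the pieces using the superadditivity of $t\mapsto t^{1/\alpha}$ (available because $1/\alpha\ge1$). This is the same template that underlies Lemma~\ref{L3.5}, now applied on the interval of integration rather than on the range of the function.

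First I would fix a constant $\lambda\in(1,\sqrt\sigma)$ depending only on $\sigma$ and mark off points $r_1=b_0<b_1<\cdots<b_{N+1}=r_2$ with
\[
	\lambda\le\frac{b_{i+1}}{b_i}\le\lambda^2,\qquad i=0,\dots,N .
\]
This is possible because $r_2/r_1\ge\nu>1$: if $r_2/r_1<\lambda$ one takes the single interval $[r_1,r_2]$, whose ratio lies in $[\nu,\lambda)$; otherwise one sets $b_i=\lambda^i r_1$ as long as $\lambda^i r_1\le r_2$ and then merges the last (possibly short) piece into the preceding one, which keeps all ratios in $[\lambda,\lambda^2]$. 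The key observation is geometric: if $b_{i+1}/b_i\le\lambda^2<\sigma$, then $[b_i,b_{i+1}]\subset(r/\sigma,\sigma r)$ for every $r\in[b_i,b_{i+1}]$, whence $\varphi_\sigma(r)\le\inf_{[b_i,b_{i+1}]}\varphi$ on each such subinterval.

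On $[b_i,b_{i+1}]$ I would then combine $\inf_{[b_i,b_{i+1}]}\varphi=\bigl(\inf_{[b_i,b_{i+1}]}\varphi^\alpha\bigr)^{1/\alpha}\le\bigl((b_{i+1}-b_i)^{-1}\int_{b_i}^{b_{i+1}}\varphi^\alpha\bigr)^{1/\alpha}$ with $\int_{b_i}^{b_{i+1}}r^{1/\alpha-1}\,dr=\alpha(b_{i+1}^{1/\alpha}-b_i^{1/\alpha})$ to get
\[
	\int_{b_i}^{b_{i+1}}
	r^{1/\alpha-1}\varphi_\sigma(r)\,dr
	\le
	\alpha\,
	\frac{b_{i+1}^{1/\alpha}-b_i^{1/\alpha}}{(b_{i+1}-b_i)^{1/\alpha}}
	\left(\int_{b_i}^{b_{i+1}}\varphi^\alpha(r)\,dr\right)^{1/\alpha}.
\]
Writing $\mu_i=b_{i+1}/b_i$, the coefficient equals $\alpha(\mu_i^{1/\alpha}-1)/(\mu_i-1)^{1/\alpha}$, a continuous function of $\mu_i$ on the compact set $[\min\{\nu,\lambda\},\lambda^2]\subset(1,\infty)$, hence bounded by a constant $C_0=C_0(\alpha,\sigma,\nu)$. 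Summing over $i$ and using $\sum_i x_i^{1/\alpha}\le\bigl(\sum_i x_i\bigr)^{1/\alpha}$ for $x_i\ge0$ (valid since $1/\alpha\ge1$), I would obtain
\[
	\int_{r_1}^{r_2}
	r^{1/\alpha-1}\varphi_\sigma(r)\,dr
	\le
	C_0\sum_i\left(\int_{b_i}^{b_{i+1}}\varphi^\alpha(r)\,dr\right)^{1/\alpha}
	\le
	C_0\left(\int_{r_1}^{r_2}\varphi^\alpha(r)\,dr\right)^{1/\alpha},
\]
which is the assertion with $A=1/C_0$; if $\int_{r_1}^{r_2}\varphi^\alpha=\infty$ there is nothing to prove.

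The delicate point is the bookkeeping at the right endpoint. When $\alpha<1$ the coefficient $\alpha(\mu^{1/\alpha}-1)/(\mu-1)^{1/\alpha}$ blows up as $\mu\to1^+$, so one must not allow a subinterval with ratio close to $1$; this is exactly what the ``merge the last short piece'' step prevents, and it is the only place where the lower bound $r_2/r_1\ge\nu$ (hence the dependence of $A$ on $\nu$) is used — in all other configurations the ratios stay in $[\lambda,\lambda^2]$ and $A$ depends only on $\alpha$ and $\sigma$. The remaining ingredients (the containment of intervals, the elementary inequality $\inf\le$ average, and the superadditivity of $t\mapsto t^{1/\alpha}$) are routine.
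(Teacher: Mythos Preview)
Your argument is correct. The paper itself does not prove this lemma but merely cites \cite[Lemma~2.6]{meIzv}, so there is no in-paper proof to compare against; your geometric decomposition of $[r_1,r_2]$ combined with the pointwise bound $\varphi_\sigma\le\inf_{[b_i,b_{i+1}]}\varphi$ and the superadditivity of $t\mapsto t^{1/\alpha}$ is the natural approach and mirrors the template of Lemma~\ref{L3.5}. The endpoint bookkeeping (merging a short last piece so that every ratio stays bounded away from~$1$, which is where the hypothesis $r_2\ge\nu r_1$ enters) is handled correctly and is indeed the only place the constant must depend on~$\nu$.
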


Lemmas~\ref{L3.5} and~\ref{L3.6} are proved in~\cite[Lemmas~2.3 and~2.6]{meIzv}.

\begin{proof}[Proof of Theorem~$\ref{T2.2}$]
Let $K$ be a compact subset of ${\mathbb R}^n$ and, moreover,
$r > 0$ and $t > 0$ be real numbers such that $K \subset B_r$ and $t > r^2$.
If
$$
	\sup_{x \in K}
	u (x, t)
	\le
	0,
$$
then
$$
	\sup_{x \in K}
	u_+ (x, t)
	=
	0;
$$
therefore, we can assume that
$$
	\sup_{x \in K}
	u (x, t)
	>
	0.
$$
Let us take the maximal integer $k$ satisfying the condition $4^k r^2 < t$.
Also put $r_i = 2^i r$ and
$$
	m_i
	=
	\sup_{
		Q_{r_i}^{t - r_i^2, t}
	}
	u,
	\quad
	i = 0, \ldots, k.
$$

We show that
\begin{equation}
	\left(
		\int_{
			m_0
		}^{
			\infty
		}
		(g_\theta (\zeta) \zeta)^{- 1/2}
		\,
		d\zeta
	\right)^2
	+
	\int_{
		m_0
	}^{
		\infty
	}
	\frac{
		d\zeta
	}{
		h_\theta (\zeta)
	}
	\ge
	C
	\int_{
		r_0
	}^{
		r_k
	}
	\rho
	q (4 \rho)
	\,
	d\rho.
	\label{PT2.2.1}
\end{equation}
Really, by Lemmas~\ref{L3.3} and~\ref{L3.4}, for any $i \in \{ 0, \ldots, k - 1 \}$ 
at least one of the following three inequalities is valid:
\begin{equation}
	\int_{
		m_i
	}^{
		m_{i+1}
	}
	(g_\theta (\zeta) \zeta)^{- 1/2}
	\,
	d\zeta
	\ge
	C
	\int_{
		r_i
	}^{
		r_{i+1}
	}
	q^{1/2} (2 \rho)
	\,
	d\rho,
	\label{PT2.2.2}
\end{equation}
\begin{equation}
	\int_{
		m_i
	}^{
		m_{i+1}
	}
	\frac{
		d\zeta
	}{
		h_\theta (\zeta)
	}
	\ge
	C
	\int_{
		r_i
	}^{
		r_{i+1}
	}
	\rho q (2 \rho)
	\,
	d\rho,
	\label{PT2.2.3}
\end{equation}
\begin{equation}
	\int_{
		m_i
	}^{
		m_{i+1}
	}
	\frac{
		d\zeta
	}{
		g_{\sqrt{\theta}} (\zeta)
	}
	\ge
	C
	\int_{
		r_i
	}^{
		r_{i+1}
	}
	\rho q (2 \rho)
	\,
	d\rho.
	\label{PT2.2.4}
\end{equation}
By $\Xi_1$, $\Xi_2$, and $\Xi_3$ we denote the sets of integers $i \in \{ 0, \ldots, k - 1 \}$ 
satisfying relations~\eqref{PT2.2.2}, \eqref{PT2.2.3}, and~\eqref{PT2.2.4}, respectively.

At first, let
\begin{equation}
	\sum_{
		i \in \Xi_1
	}
	\int_{
		r_i
	}^{
		r_{i+1}
	}
	\rho q (4 \rho)
	\,
	d\rho
	\ge
	\frac{1}{2}
	\int_{
		r_0
	}^{
		r_k
	}
	\rho q (4 \rho)
	\,
	d\rho.
	\label{PT2.2.5}
\end{equation}
Summing~\eqref{PT2.2.2} over all $i \in \Xi_1$, we obtain
$$
	\int_{
		m_0
	}^{
		\infty
	}
	(g_\theta (\zeta) \zeta)^{- 1/2}
	\,
	d\zeta
	\ge
	C
	\sum_{
		i \in \Xi_1
	}
	\int_{
		r_i
	}^{
		r_{i+1}
	}
	q^{1/2} (2 \rho)
	\,
	d\rho.
$$
By Lemma~\ref{L3.6}, this implies the estimate
$$
	\left(
		\int_{
			m_0
		}^{
			\infty
		}
		(g_\theta (\zeta) \zeta)^{- 1/2}
		\,
		d\zeta
	\right)^2
	\ge
	C
	\sum_{
		i \in \Xi_1
	}
	\left(
		\int_{
			r_i
		}^{
			r_{i+1}
		}
		q^{1/2} (2 \rho)
		\,
		d\rho
	\right)^2
	\ge
	C
	\sum_{
		i \in \Xi_1
	}
	\int_{
		r_i
	}^{
		r_{i+1}
	}
	\rho
	q (4 \rho)
	\,
	d\rho
$$
from which~\eqref{PT2.2.1} immediately follows.

Now, assume that~\eqref{PT2.2.5} is not valid. Then
\begin{equation}
	\sum_{
		i \in \Xi_2 \cup \Xi_3
	}
	\int_{
		r_i
	}^{
		r_{i+1}
	}
	\rho q (4 \rho)
	\,
	d\rho
	\ge
	\frac{1}{2}
	\int_{
		r_0
	}^{
		r_k
	}
	\rho q (4 \rho)
	\,
	d\rho.
	\label{PT2.2.6}
\end{equation}
In this case, summing~\eqref{PT2.2.3} over all $i \in \Xi_2$, we have
\begin{equation}
	\int_{
		m_0
	}^{
		\infty
	}
	\frac{
		d\zeta
	}{
		h_\theta (\zeta)
	}
	\ge
	C
	\sum_{i \in \Xi_2}
	\int_{
		r_i
	}^{
		r_{i+1}
	}
	\rho q (2 \rho)
	\,
	d\rho.
	\label{PT2.2.7}
\end{equation}
Analogously,~\eqref{PT2.2.4} allows us to assert that
$$
	\int_{
		m_0
	}^{
		\infty
	}
	\frac{
		d\zeta
	}{
		g_{\sqrt{\theta}} (\zeta)
	}
	\ge
	C
	\sum_{i \in \Xi_3}
	\int_{
		r_i
	}^{
		r_{i+1}
	}
	\rho q (2 \rho)
	\,
	d\rho.
$$
Since
$$
	\left(
		\int_{
			m_0
		}^{
			\infty
		}
		(g_\theta (\zeta) \zeta)^{- 1/2}
		\,
		d\zeta
	\right)^2
	\ge
	C
	\int_{
		m_0
	}^{
		\infty
	}
	\frac{
		d\zeta
	}{
		g_{\sqrt{\theta}} (\zeta)
	}
$$
in view of Lemma~\ref{L3.5}, this yields the inequality
\begin{equation}
	\left(
		\int_{
			m_0
		}^{
			\infty
		}
		(g_\theta (\zeta) \zeta)^{- 1/2}
		\,
		d\zeta
	\right)^2
	\ge
	C
	\sum_{i \in \Xi_3}
	\int_{
		r_i
	}^{
		r_{i+1}
	}
	\rho q (2 \rho)
	\,
	d\rho.
	\label{PT2.2.8}
\end{equation}
Combining~\eqref{PT2.2.6}, \eqref{PT2.2.7}, and~\eqref{PT2.2.8}, 
we again arrive at~\eqref{PT2.2.1}.

Further, assuming that $r$ is fixed, we obviously obtain $r_k \to \infty$ as $t \to \infty$; 
therefore, in accordance with~\eqref{T2.1.1}, \eqref{T2.1.2}, and~\eqref{T2.1.3} 
formula~\eqref{PT2.2.1} implies that $m_0 \to 0$ as $t \to \infty$. Thus,
$$
	\sup_{
		Q_{r}^{t - r^2, t}
	}
	u_+
	\to
	0
	\quad
	\mbox{as } t \to \infty.
$$

The proof is completed.
\end{proof}

\end{document}